\newlength{\longeqmarginwidth}
\newlength{\longeqwidth}
\newlength{\longeqskiplength}
\newcommand{\longeqskip}{\hspace{\longeqskiplength}}
\newcommand{\longeq}[1]{
\settowidth{\longeqmarginwidth}{{\Large \{~\}}~(\theequation)}
\setlength{\longeqwidth}{\textwidth}
\addtolength{\longeqwidth}{-\longeqmarginwidth}
\left. \parbox{\longeqwidth}{\begin{eqnarray*} #1
\end{eqnarray*}} \right\}}
\newcommand{\Vzero}[2]{\mathord{\stackrel{\textstyle\kern1pt\circ}
{\smash V\vbox to6pt{}}}\vphantom{V}_{#1}^{#2}}
\newcommand{\function}[2]{:#1 \longrightarrow #2}
\newcommand{\of}[1]{\left( #1 \right)}
\newcommand{\set}[2]{\left\{\hspace{0.2ex} #1 \left|\: #2
\right. \right\}}
\newcommand{\eval}[2]{\llbracket #1 \rrbracket_{#2}}
\newcommand{\df}{\stackrel{\rm def}{=}}
\newcommand{\im}{{\rm im}}
\newcommand{\rn}{\boldsymbol}
\newcommand{\prob}[1]{ {\bf P}\! \left[ #1 \right] }
\newcommand{\expect}[1]{ {\bf E}\! \left[ #1 \right] }
\newcounter{operator}
\newcommand{\injection}[2]{\colon #1 \rightarrowtail #2}
\title{An Extremal Problem Motivated by Triangle-Free Strongly Regular Graphs}
\author{Alexander Razborov\thanks{University of Chicago, USA, {\tt razborov@math.uchicago.edu} and Steklov Mathematical Institute, Moscow, Russia, {\tt razborov@mi.ras.ru}.}}
\begin{document}
\maketitle

\begin{abstract}
We introduce the following combinatorial problem. Let $G$ be a
triangle-free regular graph with edge density\footnote{In this paper all
densities are normalized by $n,\frac{n^2}{2}$ etc. rather than by $n-1,
{n\choose 2},\ldots$} $\rho$. What is the minimum value $a(\rho)$ for which
there always exist two non-adjacent vertices such that the density of their
common neighborhood is $\leq a(\rho)$? We prove a variety of upper bounds
on the function $a(\rho)$ that are tight for the values $\rho=2/5,\ 5/16,\
3/10,\ 11/50$, with $C_5$, Clebsch, Petersen and Higman-Sims being
respective extremal configurations. Our proofs are entirely combinatorial
and are largely based on counting densities in the style of flag algebras.
For small values of $\rho$, our bound attaches a combinatorial meaning to
Krein conditions that might be interesting in its own right. We also prove
that for any $\epsilon>0$ there are only finitely many values of $\rho$
with $a(\rho)\geq\epsilon$ but this finiteness result is somewhat purely
existential (the bound is double exponential in $1/\epsilon$).
\end{abstract}

\section{Introduction} \label{sec:intr}

Triangle-free strongly regular graphs (TFSR graphs), sometimes also called
SRNT (for strongly regular no triangles) is a fascinating object in algebraic
combinatorics. Except for the trivial bipartite series, there are only seven
such graphs known (see e.g. \cite{God}). At the same time, the existing
feasibility conditions still leave out many possibilities. For example, there
are still 66 prospective values of parameters with $\lambda_1\leq 10$, where
$\lambda_1$ is the second largest eigenvalue of $G$ \cite[Tables 1,2]{Big};
the most prominent of them probably being the hypothetical Moore graph of
degree 57. This situation is in sharp contrast with general strongly regular
graphs (or, for that matter. with finite simple groups) where non-trivial
infinite series are abundant, see e.g. \cite[Chapter 10]{GoR}.

Somewhat superficially, the methods employed for studying (triangle-free)
strongly regular graphs can be categorized in ``combinatorial'' and
``arithmetic/algebraic'' methods. The latter are based upon spectral
properties of $G$ or modular counting. The former are to a large extent based
on calculating various quantities (that we will highly prefer to normalize in
such a way that they become densities in $[0,1]$), and these calculations
look remarkably similar to those used in asymptotic extremal combinatorics,
particularly in the proofs based on flag algebras. The unspoken purpose of
this paper is to highlight and distill these connections between the two
areas. To that end, we introduce and study a natural extremal problem
corresponding to strong regularity.

Before going into some technical details, it might be helpful to digress on
the apparent contradiction of studying highly symmetric and inherently finite
objects with methods that are quite analytical and continuous in their
nature. The key to resolving this is the simple observation that has been
used in extremal combinatorics many times: any finite graph (or, for that
matter, more complicated combinatorial object) can be alternately viewed as
an analytical object called its {\em stepfunction graphon} \cite[\S
7.1]{Lov4} or, in other words, {\em infinite blow-up}. It is obtained by
replacing every vertex with a measurable set of appropriate measure. To this
object we can already apply all methods based on density calculations, and
the conversion of the results back to the finite world is straightforward.

\bigskip
Let us now fix some notation. All graphs $G$ in this paper are simple and, unless otherwise noted, triangle-free. By $n=n(G)$  we always denote the number of vertices, and let
$$
\rho=\rho(G) \df \frac{2|E(G)|}{n(G)^2}
$$
be the edge density of $G$. Note that the normalizing factor here is
$\frac{n^2}2$, not ${n\choose 2}$: the previous paragraph provides a good
clue as why this is much more natural choice. A {\em $\rho$-regular graph} is
a regular graph $G$ with $\rho(G)=\rho$.  We let
$$
a(G)\df \min_{(u,v)\not\in E(G)} \frac{|N_G(u)\cap N_G(v)|}{n(G)},
$$
where $N_G(v)$ is the vertex neighbourhood of $v$. For a rational number $\rho\in [0,1/2]$, we let
\begin{equation} \label{eq:a_rho}
a(\rho) \df \max\set{a(G)}{G\ \text{is a triangle-free}\ \rho\text{-regular graph}}
\end{equation}
Our goal is to give upper bounds on $a(\rho)$.

\begin{remark} \label{rem:maxvssup}
We stress that we do have here maximum, not just supremum, this will be
proven below (see Corollary \ref{cor:finiteness}). In particular, $a(\rho)$
is also rational. Another finiteness result (Corollary \ref{cor:finiteness2})
says that for every $\epsilon>0$ there exist only finitely many rationals
$\rho$ with $a(\rho)\geq\epsilon$. While this result is of somewhat
existential nature (the bound is double exponential in $1/\epsilon$), it
demonstrates, somewhat surprisingly, that our relaxed version of strong
regularity still implies at least some rigidity properties that might be
expected from much more symmetric structures in algebraic combinatorics.
\end{remark}

\begin{remark} \label{rem:no_graphons} The definition of $a(G)$ readily extends to graphons,
and it is natural to ask whether this would allow us to extend the definition
of $a(\rho)$ to irrational $\rho$ or at least come up with interesting
constructions beyond finite graphs: such constructions are definitely not
unheard of in the extremal combinatorics. Somewhat surprisingly (again), the
answer to both questions is negative. Namely, we have the dichotomy: every
triangle-free graphon $W$ (we do not even need regularity here) is either a
finite stepfunction of a finite vertex-weighted graph or satisfies $a(W)=0$
(Theorem \ref{thm:no_graphon}).
\end{remark}

\begin{remark} Every TFSR graph $G$ with parameters $(n,k,c)$, where $k$ is the
degree and $c$ is the size of common neighbourhoods of non-adjacent vertices
leads to the lower bound $a(k/n)\geq c/n$. Thus, optimistically, one could view upper bounding the function $a(\rho)$ as an approach to finding more feasibility conditions for TFSR graphs based on entirely combinatorial methods. This hope is somewhat supported by the fact that our bound is tight for the values corresponding to four (out of seven) known TSFR graphs, as well as an infinite sequence of values not ruled out by other conditions.
\end{remark}

\begin{remark} \label{rem:large_density}
As we will see below, in the definition \eqref{eq:a_rho} we can replace
ordinary $\rho$-regular triangle-free graphs with weighted twin-free
$\rho$-regular triangle-free graphs that can be additionally assumed to be
maximal. A complete description of such graphs with $\rho>1/3$ was obtained
in \cite{BrT}. Along with very simple Lemma \ref{lem:linear} below, this
allows us to completely compute the value of $a(\rho)$ for $\rho>1/3$ and, in
particular, determine those values of $\rho$ for which $a(\rho)> 0$. Using
relatively simple methods from Section \ref{sec:combinatorial}, we can prove
the bounds $a(\rho)\leq \frac{\rho}3\ (1/3\leq\rho\leq 3/8)$, $a(\rho)\leq
3\rho-1\ (3/8\leq \rho\leq 2/5)$ and $a(\rho)=0\ (2/5<\rho<1/2)$. But since
they are significantly inferior (that is, for $\rho<2/5$) to those that
follow from \cite{BrT}, we will save space and {\bf in the rest of the paper
focus on the range $\rho\leq 1/3$}.
\end{remark}

Our main result is shown on Figure \ref{fig:main}.
\begin{figure}[tb]
\begin{center}
\epsfig{file=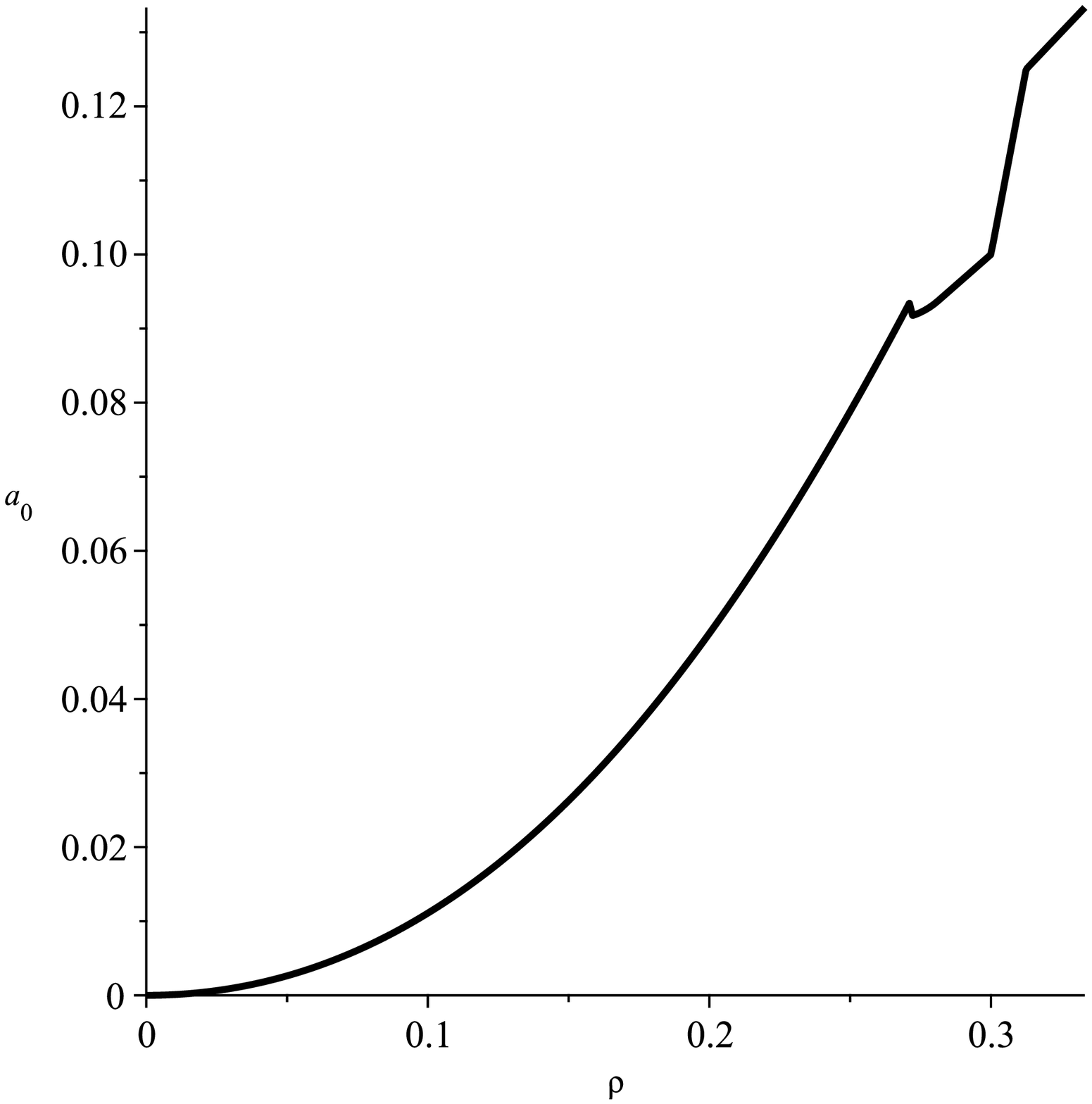,width=10cm}\\ \vspace{1cm} \epsfig{file=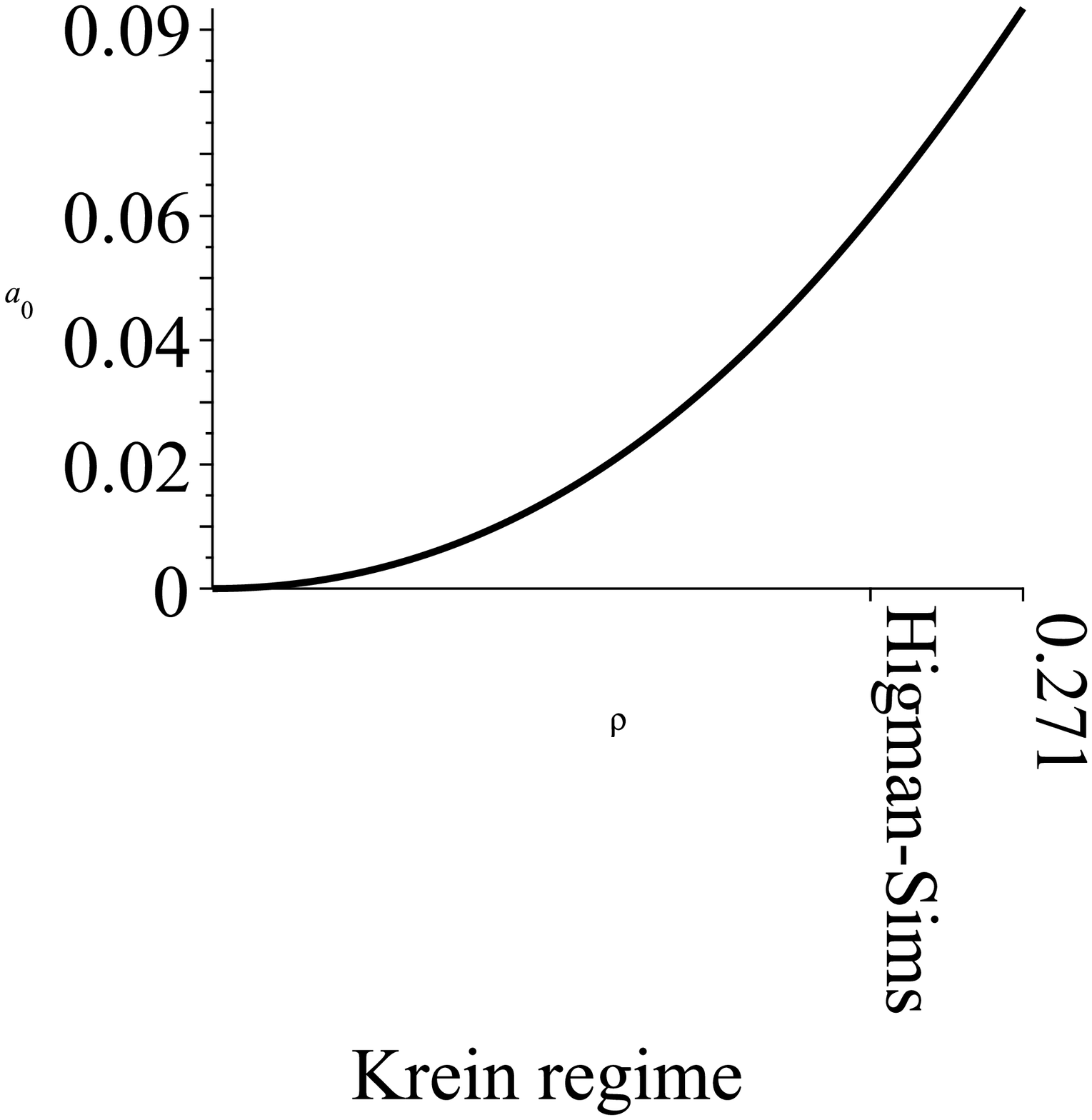,width=5cm} \hspace{3cm} \epsfig{file=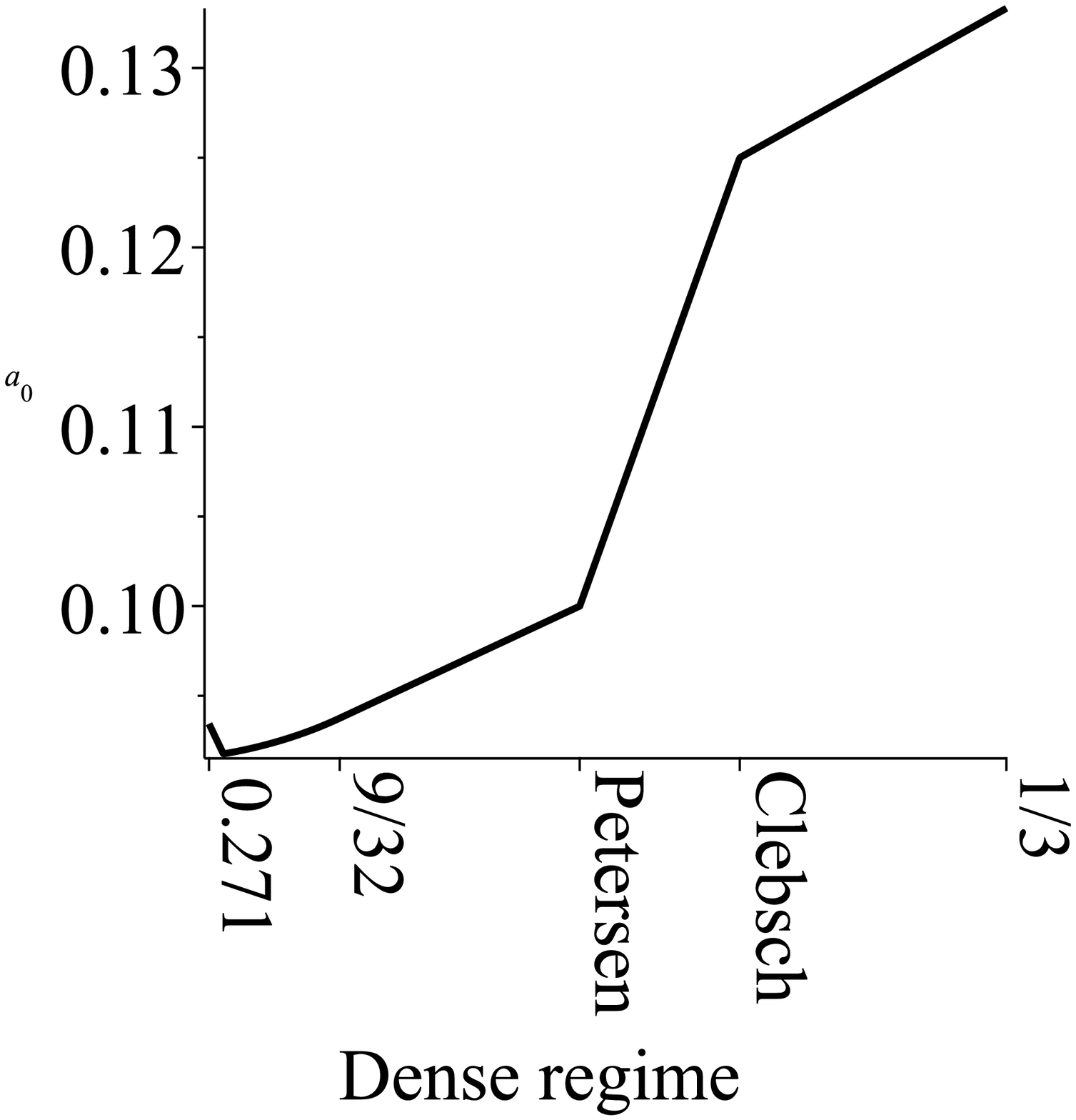,width=5cm}
\vspace{1cm}
\caption{The main result \label{fig:main}}
\end{center}
\end{figure}
The analytical expressions for our upper bound $a_0(\rho)$ will be given in
Theorem \ref{thm:main}; for now let us briefly comment on a few features of
Figure \ref{fig:main}.

\begin{remark}
 The bound is tight for the values $\rho=\frac{11}{50}, \frac 3{10},
    \frac 5{16}$ corresponding to Higman-Sims, Petersen and Clebsch,
    respectively. It is piecewise linear for $\rho\geq 9/32$ and involves three
    algebraic functions of degree $\leq 4$ when $\rho\leq 9/32$.
 \end{remark}

\clearpage

\begin{remark} \label{rem:krein}
 Let us explain the reasons for using the term ``Krein bound''. It may not be seen well
 on Figure \ref{fig:main} but this curve has a singular point at
\begin{equation}\label{eq:rho0}
  \rho_0\df \frac 3{98}(10-\sqrt{2})\approx 0.263.
\end{equation}
For $\rho\geq \rho_0$, $a_0(\rho)$ is a solution to a polynomial equation
$g_K(\rho,a) =0$ that is most likely an artifact of the proof method (and it
gets superseded at $\rho\approx 0.271$ by other methods anyway). The bound
for $\rho\leq \rho_0$ is more interesting.

 Recall
     (see e.g. \cite[Chapter 10.7]{GoR}) that the Krein parameters $K_1,K_2$
     provide powerful constraints $K_1\geq 0,\ K_2\geq 0$ on the existence
     of strongly regular graphs, and in the special
     case of triangle-free graphs we are interested in this paper they can
     be significantly simplified \cite{Big}.

     Now, $K_1,K_2$ are rational functions of $k,c$ {\em and} non-trivial
     eigenvalues $\lambda_1,\lambda_2$ of the adjacency matrix. As
     such, when written as functions of $k,c$, they become (conjugate) algebraic
     quadratic functions and thus do not seem to possess any obvious
     combinatorial meaning. Their {\em product}, however, is the {\em
     rational} function in $k,c$:
     \begin{equation} \label{eq:K1K2}
     K_1K_2=(k-1)(k-c)(k^2-k(3c+1)-c^3+4c^2-c)\geq 0
     \end{equation}
     Re-writing the non-trivial term here in the variables $\rho=k/n,\
     c=a/n$ (and recalling that $n=1+ \frac{k(k-1+c)}c$), we will get a
     constraint $f_K(\rho,a)\geq 0$ that holds for all TFSR graphs. What we
     prove with purely combinatorial methods is that for $\rho\leq\rho_0$ (and
     less us remark that all hypothetical TFSR graphs are confined to that
     region) this inequality holds in much less rigid setting.

\smallskip
     As a by-side heuristical remark, this bound was discovered by flag-algebraic
     computer experiments with particular values of $\rho$ corresponding to
     potential TFSR graphs from \cite[Tables 1,2]{Big}. The result turned out
     to be tight precisely for those values for which
     $c=\lambda_1(\lambda_1-1)$, which is equivalent to $K_2=0$. The connection
     to Krein parameters and, as a consequence, the hypothesis $f_K(\rho,a)\geq 0$
     suggested itself immediately.
\end{remark}

\section{Preliminaries} \label{sec:prel}

We utilize all notation introduced in the previous section. In particular,
all graphs $G=(V(G),E(G))$ are simple and, unless otherwise noted,
triangle-free, and $n=n(G)$ is the number of vertices.

Let us now remind some rudimentary notions from the language of flag algebras
(see \cite[\S 2.1]{flag}) restricted to graphs. A {\em type} $\sigma$ is
simply a totally labelled graph, that is a graph on the vertex set
$[k]\df\{1,2,\ldots,k\}$ for some $k$ called the {\em size} of $\sigma$.
Figure \ref{fig:types} shows all types used in this paper, including the
trivial type 0 of size 0.
\begin{figure}[ht]
\setlength{\unitlength}{0.254mm}
\begin{picture}(557,237)(30,-261)
        \special{color rgb 0 0 0}\put(110,-116){\shortstack{$1$}} 
        \special{color rgb 0 0 0}\allinethickness{0.254mm}\special{sh 0.99}\put(115,-90){\ellipse{4}{4}} 
        \special{color rgb 0 0 0}\allinethickness{0.254mm}\special{sh 0.99}\put(190,-90){\ellipse{4}{4}} 
        \special{color rgb 0 0 0}\allinethickness{0.254mm}\special{sh 0.99}\put(240,-90){\ellipse{4}{4}} 
        \special{color rgb 0 0 0}\put(30,-116){\shortstack{0}} 
        \special{color rgb 0 0 0}\put(105,-86){\shortstack{\scriptsize 1}} 
        \special{color rgb 0 0 0}\put(210,-116){\shortstack{$N$}} 
        \special{color rgb 0 0 0}\put(230,-86){\shortstack{\scriptsize 2}} 
        \special{color rgb 0 0 0}\put(180,-86){\shortstack{\scriptsize 1}} 
        \special{color rgb 0 0 0}\allinethickness{0.254mm}\special{sh 0.99}\put(395,-90){\ellipse{4}{4}} 
        \special{color rgb 0 0 0}\allinethickness{0.254mm}\special{sh 0.99}\put(445,-90){\ellipse{4}{4}} 
        \special{color rgb 0 0 0}\put(415,-116){\shortstack{$\mathcal I$}} 
        \special{color rgb 0 0 0}\put(435,-86){\shortstack{\scriptsize 2}} 
        \special{color rgb 0 0 0}\put(385,-86){\shortstack{\scriptsize 1}} 
        \special{color rgb 0 0 0}\allinethickness{0.254mm}\special{sh 0.99}\put(420,-40){\ellipse{4}{4}} 
        \special{color rgb 0 0 0}\put(410,-36){\shortstack{\scriptsize 3}} 
        \special{color rgb 0 0 0}\allinethickness{0.254mm}\special{sh 0.99}\put(495,-90){\ellipse{4}{4}} 
        \special{color rgb 0 0 0}\allinethickness{0.254mm}\special{sh 0.99}\put(545,-90){\ellipse{4}{4}} 
        \special{color rgb 0 0 0}\put(515,-116){\shortstack{$\mathcal P$}} 
        \special{color rgb 0 0 0}\put(530,-86){\shortstack{\scriptsize 2}} 
        \special{color rgb 0 0 0}\put(485,-86){\shortstack{\scriptsize 1}} 
        \special{color rgb 0 0 0}\allinethickness{0.254mm}\special{sh 0.99}\put(520,-40){\ellipse{4}{4}} 
        \special{color rgb 0 0 0}\put(510,-36){\shortstack{\scriptsize 3}} 
        \special{color rgb 0 0 0}\allinethickness{0.254mm}\path(495,-90)(520,-40) 
        \special{color rgb 0 0 0}\allinethickness{0.254mm}\path(520,-40)(545,-90) 
        \special{color rgb 0 0 0}\allinethickness{0.254mm}\special{sh 0.99}\put(295,-90){\ellipse{4}{4}} 
        \special{color rgb 0 0 0}\allinethickness{0.254mm}\special{sh 0.99}\put(345,-90){\ellipse{4}{4}} 
        \special{color rgb 0 0 0}\put(315,-116){\shortstack{$E$}} 
        \special{color rgb 0 0 0}\put(335,-86){\shortstack{\scriptsize 2}} 
        \special{color rgb 0 0 0}\put(285,-86){\shortstack{\scriptsize 1}} 
        \special{color rgb 0 0 0}\allinethickness{0.254mm}\path(295,-90)(345,-90) 
        \special{color rgb 0 0 0}\allinethickness{0.254mm}\special{sh 0.99}\put(145,-235){\ellipse{4}{4}} 
        \special{color rgb 0 0 0}\allinethickness{0.254mm}\special{sh 0.99}\put(195,-235){\ellipse{4}{4}} 
        \special{color rgb 0 0 0}\put(165,-261){\shortstack{$\mathcal Q_1$}} 
        \special{color rgb 0 0 0}\put(180,-231){\shortstack{\scriptsize 2}} 
        \special{color rgb 0 0 0}\allinethickness{0.254mm}\special{sh 0.99}\put(170,-185){\ellipse{4}{4}} 
        \special{color rgb 0 0 0}\put(160,-181){\shortstack{\scriptsize 3}} 
        \special{color rgb 0 0 0}\allinethickness{0.254mm}\path(145,-235)(170,-185) 
        \special{color rgb 0 0 0}\put(135,-231){\shortstack{\scriptsize 1}} 
        \special{color rgb 0 0 0}\allinethickness{0.254mm}\special{sh 0.99}\put(250,-235){\ellipse{4}{4}} 
        \special{color rgb 0 0 0}\allinethickness{0.254mm}\special{sh 0.99}\put(300,-235){\ellipse{4}{4}} 
        \special{color rgb 0 0 0}\put(270,-261){\shortstack{$\mathcal Q_2$}} 
        \special{color rgb 0 0 0}\put(285,-231){\shortstack{\scriptsize 2}} 
        \special{color rgb 0 0 0}\allinethickness{0.254mm}\special{sh 0.99}\put(275,-185){\ellipse{4}{4}} 
        \special{color rgb 0 0 0}\put(265,-181){\shortstack{\scriptsize 3}} 
        \special{color rgb 0 0 0}\put(240,-231){\shortstack{\scriptsize 1}} 
        \special{color rgb 0 0 0}\allinethickness{0.254mm}\path(275,-185)(300,-235) 
        \special{color rgb 0 0 0}\allinethickness{0.254mm}\special{sh 0.99}\put(355,-235){\ellipse{4}{4}} 
        \special{color rgb 0 0 0}\allinethickness{0.254mm}\special{sh 0.99}\put(405,-235){\ellipse{4}{4}} 
        \special{color rgb 0 0 0}\put(375,-261){\shortstack{$\mathcal D$}} 
        \special{color rgb 0 0 0}\put(390,-231){\shortstack{\scriptsize 2}} 
        \special{color rgb 0 0 0}\put(345,-231){\shortstack{\scriptsize 1}} 
        \special{color rgb 0 0 0}\allinethickness{0.254mm}\special{sh 0.99}\put(380,-185){\ellipse{4}{4}} 
        \special{color rgb 0 0 0}\put(370,-181){\shortstack{\scriptsize 3}} 
        \special{color rgb 0 0 0}\allinethickness{0.254mm}\path(355,-235)(380,-185) 
        \special{color rgb 0 0 0}\allinethickness{0.254mm}\path(380,-185)(405,-235) 
        \special{color rgb 0 0 0}\allinethickness{0.254mm}\special{sh 0.99}\put(430,-185){\ellipse{4}{4}} 
        \special{color rgb 0 0 0}\put(420,-181){\shortstack{\scriptsize 4}} 
        \special{color rgb 0 0 0}\allinethickness{0.254mm}\path(355,-235)(430,-185) 
        \special{color rgb 0 0 0} 
\end{picture}
\caption{\label{fig:types} Types}
\end{figure}

A {\em flag} is a graph partially labelled by labels from $[k]$ for some
$k\geq 0$. Every flag $F$ belongs to the unique type obtained by removing all
unlabelled vertices. Figure \ref{fig:flags} lists all flags we need in this
paper.
\begin{figure}[ht]
\input{flags.eepic}
\caption{\label{fig:flags} Flags}
\end{figure}

Mnemonic rules used in this notation are reasonably consistent: the
subscript, when present, normally denotes the overall number of vertices in
the flag. The first part of the superscript denotes the type of the flag. The
remaining part, when present, helps to identify the flag in case of
ambiguity. For example, there is only one flag $P_3^N$ based on the path of
length 2 and the type $N$. There are, however, two flags based on its
complement $\bar P_3$, and $\bar P_3^{N.c}$ [$\bar P_3^{N.b}$] is the flag in
which the first labelled vertex is the central [border, respectively] vertex
in $\bar P_3$.

\clearpage

Also, for $S\subseteq [3]$ we denote by $F^{\mathcal I}_S$ the flag with 3
labelled independent vertices and one unlabelled vertex connected to the
vertices from $S$. Thus, $S_4^{\mathcal I} = F^{\mathcal I}_{\{1,2,3\}}$ and
$T_4^{\mathcal I} = F^{\mathcal I}_{\{3\}}$.

\medskip
Let $F$ be a flag of type $\sigma$ with $k$ labelled vertices and $\ell-k$
unlabelled ones, and $v_1,\ldots,v_k$ be ({\em not} necessarily distinct)
vertices in the target graph $G$ that span the type $\sigma$, that is
$(v_i,v_j)\in E(G)$ if and only if $(i,j)\in E(\sigma)$. Then we let
$F(v_1,\ldots,v_k)$ be the probability that after picking
$\rn{w_{k+1}},\ldots, \rn{w_\ell}\in V(G)$ independently at random, the
$\sigma$-flag induced in $G$ by $v_1,\ldots,v_k,\rn{w_{k+1}},\ldots,
\rn{w_\ell}$ is isomorphic (in the label-preserving way) to $F$. We stress
that $\rn{w_{k+1}},\ldots,\rn{w_\ell}$ are chosen completely independently at
random; in particular some or all of them may be among $\{v_1,\ldots,v_k\}$.
When this happens, we treat colliding vertices as non-adjacent twins.

We will also need some basic operations on flags (multiplication, evaluation
and lifting operators, to be exact) but since they will not be needed until
Section \ref{sec:analytical}, we defer it until then.

In this notation $\rho=\frac{2|E(G)|}{n^2}$ is the edge density,
$e(v)=\frac{|N_G(v)|}{n}$ is the relative degree of $v$ and $P_3^N(u,v) =
\frac{|N_G(u)\cap N_G(v)|}{n^2}$ is the relative size of the common
neighbourhood of $u$ and $v$. A graph $G$ is {\em $\rho$-regular} if
$e(v)\equiv \rho$. Etc.

\smallskip
{\bf Warning.} When evaluating [the density of] say $C_4$, we must take into
account not only induced copies, but also contributions made by paths $P_3$
(one collapsing diagonal) and even by edges (both diagonals collapsing).

\smallskip
We let
$$
a(G) \df \min_{(u,v)\not\in E(G)} P_3^N(u,v)
$$
and, for a rational $\rho\in [0,1/2]$, we also let
$$
a(\rho)\df\max \set{a(G)}{G\ \text{a triangle-free $\rho$-regular graph}}
$$
(we will prove below that the minimum value here is actually attained).

\section{The statement of the main result}

Many of our statements and proofs, particularly for small values of $\rho$, involve rather cumbersome computations.
A Maple worksheet with supporting evidence can be found at {\tt http://people.cs.uchicago.edu/\~{}razborov/files/tfsr.mw}

Let\footnote{This is the non-trivial factor in \eqref{eq:K1K2} re-written in terms of $\rho,a$}
$$
f_K(\rho,a) \df a^3+(3\rho-4)a^2 + (5\rho-1)a -4\rho^3+\rho^2.
$$
Then
$$
f_K(\rho,\rho^2) =\rho^3(\rho^3+3\rho^2-4\rho+1)>0
$$
(since $\rho\leq 1/3$) while

$$
f_K\of{\rho,\frac{\rho^2}{1-\rho}} = -\frac{\rho^5(1-2\rho)}{(1-\rho)^3}<0.
$$
Let $\mathsf{Krein}(\rho)$ be the largest (actually, the only) root of the
cubic polynomial equation $f_K(\rho,z)=0$ in the interval\footnote{The left
end of this interval is determined entirely by convenience, but the right end
represents a trivial upper bound on $a(\rho)$ resulting from double counting
copies of $C_4$. See the calculation after \eqref{eq:trivial_bound} for more
details.} $z\in \left[ \rho^2, \frac{\rho^2}{1-\rho}\right]$.

Next, let
\begin{eqnarray*}
&& g_K(\rho,a) \df a^4+a^3((4\sqrt 2-8)\rho +7-4\sqrt 2) + a^2\rho((6-4\sqrt 2)\rho +8\sqrt 2 -13)\\
&& \longeqskip +a\rho(\rho^2 +(15-10\sqrt 2)\rho+2\sqrt 2-3)+\rho^3((8\sqrt 2-12)\rho+3-2\sqrt 2)
\end{eqnarray*}
(the meaning of this expression might become clearer in Section \ref{sec:krein}). We again have $g_K(\rho,\rho^2)>0$,
\begin{equation} \label{eq:g_K}
g_K\of{\rho,\frac{\rho^2}{1-\rho}} =-\frac{\rho^7(1-2\rho)}{(1-\rho)^4}<0,
\end{equation}
and we define $\widehat{\mathsf{Krein}}(\rho)$ as the largest (unique) root of the equation $g_K(\rho,z)=0$ in the interval $z\in \left[\rho^2, \frac{\rho^2}{1-\rho}\right]$.

We note that $\mathsf{Krein}(\rho_0)= \widehat{\mathsf{Krein}}(\rho_0) =
\frac{\rho_0}{3}$ (recall that $\rho_0$ is given by \eqref{eq:rho0}), and
that they have the same first derivative at $\rho=\rho_0$ as well. It should
also be noted that $\widehat{\mathsf{Krein}}(\rho)\geq \mathsf{Krein}(\rho)$
and that they are very close to each other. For example, let
$$
\rho_1\approx 0.271
$$
be the appropriate root of the equation $g_K(\rho,\frac{1-3\rho}2)=0$; this
is the point at which Krein bounds yield to more combinatorial methods, see
Figure \ref{fig:main}. Then in the relevant interval $\rho\in[\rho_0,\rho_1]$
we have $\widehat{\mathsf{Krein}}(\rho) \leq \mathsf{Krein}(\rho)+3\cdot
10^{-6}$.

We finally let
$$
\mathsf{Improved}(\rho) \df \frac{15-22\rho-2\sqrt{242\rho-27-508\rho^2}}{74},
$$
and let
$$
\rho_2 \df \frac{66+2\sqrt{13}}{269} \approx 0.272
$$
be the root of the equation $\mathsf{Improved(\rho)}=\frac {1-3\rho}{2}$.

We can now explain Figure \ref{fig:main} as follows:

\begin{theorem} \label{thm:main}
For $\rho\leq 1/3$ we have $a(\rho)\leq a_0(\rho)$, where
$$
a_0(\rho)\df
\begin{cases}
\mathsf{Krein}(\rho), & \rho\in [0,\rho_0]\\
\widehat{\mathsf{Krein}}(\rho), & \rho\in [\rho_0,\rho_1]\\
\frac{1-3\rho}2, & \rho \in [\rho_1, \rho_2]\\
\mathsf{Improved}(\rho), & \rho\in [\rho_2,9/32]\\
\rho/3, & \rho\in [9/32,3/10]\\
2\rho-\frac 12, & \rho\in [3/10, 5/16]\\
\frac 25\rho, & \rho\in [5/16,1/3].
\end{cases}
$$
\end{theorem}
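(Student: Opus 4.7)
The proof naturally splits along the seven pieces in the definition of $a_0(\rho)$; each corresponds to a different extremal configuration ($C_5$, Petersen, Clebsch, Higman-Sims, and their interpolations), so the individual arguments are largely independent. The common reduction is to fix a triangle-free $\rho$-regular $G$ (or a suitable graphon, via the dichotomy of Remark \ref{rem:no_graphons}), set $a \df a(G)$, and express $a$ as the minimum of $P_3^N(u,v)$ over non-adjacent pairs $(u,v)$. The standard flag-algebraic inequality $a \cdot I_3^N \leq P_3^N$, holding pointwise as an identity between flags of type $N$, converts that minimum into a weighted expectation. Consequently it suffices to prove each bound as a polynomial inequality among densities of small subgraphs of $G$, subject to the linear identities coming from $\rho$-regularity and triangle-freeness (the latter kills any flag containing a $K_3$).

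For the upper end $\rho \in [5/16, 1/3]$ the target $a \leq 2\rho/5$ should be extracted from a single positivity statement involving flags of type $\mathcal I$, modelled on the spectral identity that is tight on $C_5$. The two adjacent intervals $[3/10, 5/16]$ and $[9/32, 3/10]$ give the linear bounds $a \leq 2\rho - 1/2$ and $a \leq \rho/3$; each piece should arise as a convex combination of two tight flag inequalities, one saturating at the Petersen endpoint and the other at Clebsch (respectively at Higman-Sims). The range $[\rho_1, \rho_2]$ yields $(1-3\rho)/2$, which should follow from a direct counting argument: fix $(u,v)\notin E$ realising the minimum, note that by triangle-freeness $N(u)$, $N(v)$ and $N(u)\cap N(v)$ carve a subset of $V(G) \setminus \{u,v\}$ into three disjoint pieces, and turn $\rho$-regularity into the desired inclusion-exclusion estimate.

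The two remaining ranges carry the technical core. For $\rho \in [\rho_2, 9/32]$ the expression $\mathsf{Improved}(\rho)$ carries a square root, an unmistakable signature of a quadratic optimisation: I would write a one-parameter family of square norms $\llbracket (F - \lambda F')^2 \rrbracket_\sigma \geq 0$ for suitable flags (most plausibly of type $N$ or $\mathcal P$ drawn from Figure \ref{fig:flags}), optimise the discriminant in $\lambda$, and read off the $\sqrt{242\rho - 27 - 508\rho^2}$ term. The true main obstacle, however, is the Krein piece $\rho \in [0, \rho_0]$: we need a purely combinatorial certificate for the cubic inequality $f_K(\rho, a) \geq 0$ which classically required the non-negativity of the Krein product $K_1 K_2$ and thus seemed to depend on spectral data. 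The plan is to construct a flag-algebra sum-of-squares, assembled from flags of types $\mathcal I$ and $\mathcal P$ (those governing the second Krein parameter), whose expansion in densities reproduces $f_K(\rho, a)$ exactly, modulo terms that vanish under $\rho$-regularity and triangle-freeness. The slight weakening $\widehat{\mathsf{Krein}}(\rho)$ on $[\rho_0, \rho_1]$ is, as Remark \ref{rem:krein} already anticipates, an artifact of the method: I expect it to be obtained by adjoining one extra non-negative square to the same certificate and retuning coefficients to compensate for its contribution.
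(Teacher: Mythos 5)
Your proposal treats the seven pieces of $a_0(\rho)$ as seven quasi-independent flag-algebra certificates, one per range of $\rho$, but that is not how the paper's proof is organized and, more importantly, it omits the single idea that drives most of the argument. The paper fixes an extremal non-adjacent pair $v_1,v_2$ with $P_3^N(v_1,v_2)=a$, sets $P=N(v_1)\cap N(v_2)$ and $I=V\setminus(N(v_1)\cup N(v_2))$, and then performs a case analysis on the \emph{unweighted size} $c=|P|$ (in the twin-free weighted reduction of $G$). After observing one may assume $a>\rho/3$ (Claim~\ref{clm:v3}, inequality~\eqref{eq:first_bound}), the case $c=2$ yields $a\le\min(2\rho/5,\,2\rho-1/2)$, the case $c=3$ yields $a\le\frac{3}{14}(1-2\rho)$ and, with more work, $\mathsf{Improved}(\rho)$, and the case $c\ge 4$ yields $a\le(1-3\rho)/2$ via Lemma~\ref{lem:1234}. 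The piecewise structure of $a_0(\rho)$ is then read off a posteriori from which bound is binding where; the pieces are not independently tuned to $C_5$, Clebsch, Petersen and Higman-Sims (note in particular that $C_5$ lives at $\rho=2/5>1/3$, outside the range of the theorem). Your attribution of $a\le 2\rho/5$ to ``a single positivity statement ... modelled on the spectral identity tight on $C_5$'' and of $(1-3\rho)/2$ to a ``direct counting argument'' on $N(u),N(v),N(u)\cap N(v)$ alone is a plausible-sounding alternative, but neither is what the paper does nor has it been shown to work; the $c\ge 4$ argument requires the nontrivial structural Lemma~\ref{lem:1234} on the measure of $\bigcup_{i\le 4} N(w_i)$.

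A further gap is the proposed reduction step. You assert a ``standard flag-algebraic inequality $a\cdot I_3^N\le P_3^N$'' holding ``pointwise as an identity between flags of type $N$.'' No such identity holds (e.g.\ take a near-bipartite $G$: for adjacent pieces $I_3^N$ can be close to $1$ while $P_3^N$ is tiny). The paper instead works locally at the fixed extremal pair and uses $P_3^N(u,v)\ge a$ for \emph{specific} non-adjacent pairs as they arise. Your account of the Krein range is closer to the truth: the paper does prove $f_K(\rho,a)\ge 0$ and $g_K(\rho,a)\ge 0$ by flag-algebraic density estimates on $\eval{S_4^{\mathcal I}(S_4^{\mathcal I}+T_4^{\mathcal I})}{\mathcal I,[1,2]}(v_1,v_2)$, and it even remarks the argument could be recast as a sum-of-squares. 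But as written, the upper-range part of your proposal would not produce a proof without the case analysis on $c$ and the accompanying structural claims about $P$ and $I$.
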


\section{Finiteness results} \label{sec:boundedeness}

Before embarking on the proof of Theorem \ref{thm:main}, let us fulfill the promise made in Remarks \ref{rem:maxvssup} and \ref{rem:no_graphons}.

Throughout the paper we will be mostly working with (vertex)-weighted graphs,
i.e. with graphs $G$ equipped with a probability measure $\mu$ on $V(G)$,
ordinary graphs corresponding to the uniform measure. The flag-algebraic
notation $F(v_1,\ldots,v_k)$ introduced in Section \ref{sec:prel} readily
extends to this case simply by changing the sampling distribution from
uniform to $\mu$.

The {\em twin relation $\approx$} on $G$ is given by $u\approx v$ iff $N_G(u)=N_G(v)$, and a graph $G$ is {\em twin-free} if its twin relation is trivial. Factoring a graph by its twin relation gives us a {\bf twin-free weighted} graph $G^{\text{red}}$ that preserves all properties of the original graph $G$ (like the values $\rho(G)$ and $a(G)$, $\rho$-regularity or triangle-freeness) we are interested in this paper.

Our main technical argument in this section is the following

\begin{theorem} \label{thm:boundedeness}
Let $(G,\mu)$ be a vertex-weighted triangle-free twin-free graph and $a\df a(G,\mu)$. Then
$$
n(G) \leq (2a^{-1})^{1+a^{-1}} +2a^{-1}.
$$
\end{theorem}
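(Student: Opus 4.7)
My strategy is to exhibit $V(G)$ as the set of leaves of a rooted classification tree of depth at most $1 + a^{-1}$ and branching at most $2a^{-1}$ per level. The root is $V(G)$; each internal node is a ``class'' of vertices sharing the same adjacency pattern to the pivots chosen on the path from the root, and each step picks a new pivot vertex to refine the classes further. Twin-freeness of $G$ ensures the refinement eventually reaches singleton leaves, at which point every vertex has been uniquely identified by its pivot signature.

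\textbf{Key ingredients.} First, if $n(G) \geq 3$, every vertex $v$ satisfies $\mu(N(v)) \geq a$: triangle-freeness with $n \geq 3$ forces the existence of a non-neighbor $u \neq v$, and then $a \leq \mu(N(u) \cap N(v)) \leq \mu(N(v))$. Second, the depth of the tree is at most $1 + a^{-1}$; this should follow from a ``mass-decrement'' argument, maintaining an invariant $\Phi$ (for instance, the total $\mu$-mass of common neighborhoods over still-equivalent pairs) that begins at most $1$ and drops by at least $a$ with each new pivot, forcing termination within $1 + a^{-1}$ steps. Third, the branching at each level is at most $2a^{-1}$: inside a single class, distinct sub-classes correspond to vertex pairs whose pairwise common-neighborhood masses are at least $a$, and these masses collectively fit within the total measure $1$, capping the number of sub-classes by $2a^{-1}$. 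Combining these bounds gives at most $(2a^{-1})^{1+a^{-1}}$ leaves; adding the at most $2a^{-1}$ pivot vertices themselves (which may or may not be classified as leaves) yields the stated bound.

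\textbf{Main obstacle.} The hardest part is to prove the depth and branching bounds simultaneously in a way that yields exactly the stated product. A naive binary split by adjacency to a single pivot gives branching only $2$ but could require depth $\Theta(\log n)$; conversely, enforcing a shallow tree tends to force larger branching. Balancing the two to obtain $(2a^{-1})^{1+a^{-1}}$ requires a careful pivot-selection rule and choice of invariant $\Phi$, most likely leveraging triangle-freeness to force adjacent-pair common neighborhoods to vanish, so the mass decrement is driven purely by non-adjacent pairs. An additional subtlety is the treatment of vertices of zero $\mu$-mass: although they contribute to $n(G)$, they play no role in any mass-based argument, so one must show they can be absorbed into the pivot set (whose size is separately bounded by $2a^{-1}$) without violating twin-freeness.
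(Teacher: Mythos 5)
Your classification-tree plan is a genuinely different route from the paper's, but as you yourself note, the central engine is missing, and I don't see how to supply it.

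The key gap is the branching bound. A single pivot separates a class into only two sub-classes (neighbors / non-neighbors), so to get branching $2a^{-1}$ you would need either to pick $\Theta(\log(1/a))$ pivots simultaneously at each level, or to split on something other than adjacency to one vertex. Your stated justification --- ``distinct sub-classes correspond to vertex pairs whose pairwise common-neighborhood masses are at least $a$, and these masses collectively fit within the total measure $1$'' --- does not yield $2a^{-1}$: the sets $N(u)\cap N(v)$ over different pairs are not disjoint, so there is no clean packing argument, and even if they were, $\binom{m}{2}a\leq 1$ gives $m=O(a^{-1/2})$, not $2a^{-1}$, and these bounds have to multiply across the tree. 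Because you haven't nailed down the splitting rule, the ``mass-decrement invariant $\Phi$'' for the depth bound is also unspecified, and the dependency between the two bounds (which you flag as the hardest part) is unresolved. Finally, your concern about zero-mass vertices is real and cannot be dismissed by parking them in the pivot set: that set has size at most $2a^{-1}$, while the number of zero-mass vertices is a priori unbounded by any measure argument; some purely combinatorial (cardinality-based, not measure-based) device is needed for them.

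For contrast, the paper avoids a tree altogether. After sorting by weight, it isolates a set $W_0$ of ``light'' vertices of total measure $<a/2$ (the heavy remainder accounts for the $2a^{-1}$ factor via averaging) and then iteratively shrinks a set $W\subseteq W_0$: at each step one finds a vertex $v^*$ adjacent to at least an $a/2$ fraction of $W$ \emph{by cardinality} (this handles the zero-mass issue), restricts to $W\cap N(v^*)$, and tracks the potential $\mu\bigl(\bigcup_{v\in K(W)}N_G(v)\bigr)$ with $K(W)=\bigcap_{w\in W}N_G(w)$, which increases by at least $a$ per step. The termination argument is measure-theoretic, the shrinking ratio is combinatorial, and the two interact cleanly to give $|W_0|\leq(2a^{-1})^{a^{-1}}$; the crucial structural facts are that $K(W)\neq\emptyset$ forces $W$ independent, that $L(W)$ (vertices neither fully in nor fully out of every $N(w)$) is non-empty by twin-freeness, and that common neighborhoods of ``disagreeing'' pairs are trapped inside $L(W)$ by triangle-freeness. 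Your plan would need an analogue of this potential function and a concrete splitting rule to become a proof; as written, the decisive steps are exactly the ones you have labelled as obstacles.
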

\begin{proof}
Let $n\df n(G)$ and $V(G) \df \{v_1,\ldots,v_n\}$, where $\mu(v_1)\geq\ldots
\geq \mu(v_n)$. Choose the maximal $k$ with the property
$\mu(\{v_k,\ldots,v_n\})\geq a/2$.  Then, by averaging, we have
$\frac{1-a/2}{k-1}\geq\frac{a/2}{n-k+1}$ which is equivalent to
$$
n\leq 2a^{-1}(n-k+1).
$$
Hence, denoting
$$
W_0\df \{v_{k+1},\ldots,v_n\}
$$
(note for the record that $\mu(W_0)< a/2$), it suffices to prove that
\begin{equation} \label{eq:w0bound}
|W_0| \leq (2a^{-1})^{a^{-1}}.
\end{equation}

For $W\subseteq V(G)$ let us define
$$
K(W) \df \bigcap_{w\in W} N_G(w);
$$
note that $K(W)\cap W=\emptyset$.
The bound \eqref{eq:w0bound} will almost immediately follow from the following
two claims.

\begin{claim} \label{clm:easy}
For any $W\subseteq V(G)$ and $v^\ast\not\in W\cup K(W)$ we have
$$
\mu\of{\of{\bigcup_{v\in K(w)}N_G(v)}\cup N_G(v^\ast)} \geq
\mu\of{\bigcup_{v\in K(w)}N_G(v)} +a.
$$
\end{claim}
\begin{proofof}{Claim \ref{clm:easy}}
Since $v^\ast\not\in K(W)$, there exists $w\in W$ such that
$(v^\ast,w)\not\in E(G)$; moreover, $w\neq v^\ast$ since $v^\ast\not\in W$.
Now, all vertices in $N_G(v^\ast)\cap N_G(w)$ contribute to the difference
$N_G(v^\ast)\setminus \bigcup_{v\in K(w)}N_G(v)$ (since $w\in W$ and $G$ is
triangle-free).
\end{proofof}

\begin{claim} \label{clm:difficult}
For every $W\subseteq V(G)$ with $\mu(W)\leq a/2$ and $|W|\geq 2$ there
exists $v^\ast\not\in W\cup K(W)$ such that\footnote{note that this bound is
about absolute {\bf sizes}, not about measures}
$$
|W\cap N_G(v^\ast)| \geq \frac a2|W|.
$$
\end{claim}
\begin{proofof}{Claim \ref{clm:difficult}}
Let
$$
L(W) \df \set{v\not\in W}{N_G(v)\cap W\not\in \{\emptyset,W\}}.
$$
Note that $L(W)$ is disjoint from both $W$ and $K(W)$ and that there are no
edges between $K(W)$ and $L(W)$. The desired vertex $v^\ast$ will belong to
$L(W)$, and we consider two (similar) cases.

{\bf Case 1.} $K(W)=\emptyset$.

\noindent In this case we have
\begin{equation} \label{eq:Lw}
L(W) = \of{\bigcup_{w\in W} N_G(w)}\setminus W.
\end{equation}
W.l.o.g. we can assume that $n\geq 3$ which implies (since $G$ is twin-free)
that $G$ is not a star. That is, for every $w\in V(G)$ there exists $v\neq w$
non-adjacent to it and hence we have the bound $e(w)\geq P_3^N(v,w)\geq a$ on
the minimum degree. Along with \eqref{eq:Lw} and the assumption $\mu(w)\leq
a/2$, we get $\mu(N_G(w)\cap L(W))\geq a/2$ for any $w\in W$. Now the
existence of the required $v^\ast\in L(W)$ follows by standard double
counting of edges between $W$ and $L(W)$ (note that, unlike $L(W)$, the set
$W$ is {\bf not} weighted in this argument according to $\mu$).

\smallskip
{\bf Case 2.} $K(W)\neq\emptyset$.

\noindent Then $W$ is independent and the condition $v\not\in W$ in the
definition of $L(W)$ can be dropped. Fix arbitrarily $w\neq w'\in W$ (this is
how we use the assumption $|W|\geq 2$). Then $w,w'$ are not twins and
$N_G(w)\triangle N_G(w')\subseteq L(W)$, hence $L(W)\neq\emptyset$. Fix
arbitrarily $v\in L(W)$ and $w\in W$ with $(v,w)\not\in E(G)$. Then
\begin{equation} \label{eq:p3vsl}
N_G(v) \cap N_G(w) \subseteq L(W)
\end{equation}
(since there are no edges between $L(W)$ and $K(W)$) hence $\mu(L(W))\geq a$.
We claim that actually $\mu(N_G(w)\cap L(W))\geq a$ for {\bf every} $w\in W$.
Indeed, if $N_G(w)\supseteq L(W)$ this follows from the bound we have just
proved, and if there exists $v\in L(W)$ with $(v,w)\not\in E(G)$, this
follows from \eqref{eq:p3vsl}. The analysis of Case 2 is now completed by the
same averaging argument as in Case 1 (with the final bound improved by a
factor of two).
\end{proofof}

The rest of the proof of Theorem \ref{thm:boundedeness} is easy. We start
with the set $W_0$ and then, using Claims \ref{clm:difficult} and
\ref{clm:easy}, recursively construct sets $W_0\supset W_1\supset
W_2\supset\ldots$ such that\footnote{We could have shaved off an extra factor
$2^{r-1}$ by observing that Case 1 in Claim \ref{clm:difficult} may occur at
most once.} $|W_r|\geq (2a^{-1})^r|W_0|$ and
\begin{equation}\label{eq:termination}
   \mu\of{\bigcup_{v\in K(W_r)}N_G(v)}\geq ar.
\end{equation}
This process may terminate for only one reason: when the assumption
$|W_r|\geq 2$ from Claim \ref{clm:difficult} no longer holds. On the
other hand, due to \eqref{eq:termination}, it must terminate within $a^{-1}$
steps. The bound \eqref{eq:w0bound} follows, and this also completes the
proof of Theorem \ref{thm:boundedeness}.
\end{proof}

\begin{remark}
The bound in Theorem \ref{thm:boundedeness} is essentially tight. Indeed, let
us consider the graph $G_h$ on $n=2h+2^h$ vertices
$$
\set{u_{i\epsilon}}{i\in [h],\ \epsilon\in \{0,1\}} \stackrel .\cup \set{v_a}{a\in \{0,1\}^h},
$$
and let $E(G_h)$ consist of the matching $\set{(u_{i0}, u_{i1})}{i\in
[h]}\cup \set{(v_a,v_{1-a})}{a\in\{0,1\}^h}$ as well as the cross-edges
$\set{(u_{i\epsilon}, v_a)}{a(i)=\epsilon}$. Then $G$ is a triangle-free
twin-free graph and for every $(w,w')\not\in E(G)$, $N_G(w)\cap N_G(w')$
either contains an $u$-vertex or contains at least $2^{h-2}$ $v$-vertices.
Hence if we set up the weights as $\mu(u_{i\epsilon}) =\frac 1{4h}$ and
$\mu(v_a)=2^{-h-1}$, we will have $a(G,\mu)\geq \frac 1{4h}$ and $n(G)$ is
inverse exponential in $a(G,\mu)^{-1}$.
\end{remark}

Before deriving consequences mentioned in the introduction, we need a simple
exercise in linear algebra (and optimization).

\begin{lemma} \label{lem:linear}
Let $G$ be a finite graph. Then there exists at most one value $\rho=\rho_G$ for which there exist vertex weights $\mu$ such that $(G,\mu)$ is $\rho$-regular. Whenever $\rho_G$ exists, it is a rational number. Moreover, in that case there are {\bf rational} weights $\eta$ such that $(G,\eta)$ is $\rho_G$-regular and
$$
a(G,\eta) = \max \set{a(G,\mu)}{(G,\mu)\ \text{is}\ \rho_G-\text{regular}}.
$$
\end{lemma}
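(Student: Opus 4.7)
The plan is to treat everything as linear algebra / linear programming with rational data. Let $A$ denote the adjacency matrix of $G$ and view $\mu$ as a column vector indexed by $V(G)$. The condition that $(G,\mu)$ is $\rho$-regular is precisely $A\mu = \rho\mathbf{1}$ together with $\mu\geq 0$ and $\mathbf{1}^T\mu = 1$; the weighted edge density then comes out to $\mu^T A \mu = \rho\mathbf{1}^T\mu = \rho$ automatically.

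For the uniqueness of $\rho_G$, I would exploit the symmetry of $A$. Given $A\mu_1 = \rho_1\mathbf{1}$ and $A\mu_2 = \rho_2\mathbf{1}$ with both $\mu_i$ probability vectors,
\[
\rho_1 = \rho_1\mathbf{1}^T\mu_2 = (A\mu_1)^T\mu_2 = \mu_1^T A\mu_2 = \rho_2\mu_1^T\mathbf{1} = \rho_2.
\]

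For the rationality of $\rho_G$, I would consider the polytope
\[
P = \{(\mu,\rho) : A\mu = \rho\mathbf{1},\ \mathbf{1}^T\mu = 1,\ \mu\geq 0\},
\]
which is defined by finitely many linear (in)equalities with rational coefficients and is bounded, since $\mu$ lies in the standard simplex and this forces $\rho = (A\mu)_v \in [0,1]$ for any $v$. Whenever $\rho_G$ exists, $P$ is non-empty and therefore has a vertex; vertices of rational polytopes are rational, and by the uniqueness step every point of $P$ has the same $\rho$-coordinate $\rho_G$, so $\rho_G \in \mathbb{Q}$.

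Finally, for the existence of a rational maximizer $\eta$, now that $\rho_G \in \mathbb{Q}$ I would introduce an auxiliary variable $t$ and write down the linear program
\[
\max\ t \quad \text{s.t.} \quad \sum_{w \in N_G(u)\cap N_G(v)} \mu(w) \geq t \ \ \forall u \ne v,\ (u,v)\notin E(G);\ A\mu = \rho_G \mathbf{1};\ \mathbf{1}^T \mu = 1;\ \mu \geq 0.
\]
This LP has rational data, a non-empty feasible region (any $\rho_G$-regular weighting extends to a feasible point by taking $t$ equal to the induced minimum), and a bounded objective ($t \leq 1$). Standard LP theory therefore produces an optimal basic solution $(\eta, t^\ast)$ with rational coordinates, and by construction this $\eta$ is $\rho_G$-regular and realises the maximum of $a(G,\mu)$. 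No step presents a real obstacle; the only point requiring any care is the rationality of $\rho_G$, which reduces immediately to the standard fact that a non-empty rational polytope has rational vertices.
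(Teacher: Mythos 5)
Your proof is correct and follows the same linear-algebraic/LP philosophy as the paper, but the middle step is packaged somewhat differently. For the uniqueness of $\rho_G$ you use the symmetry of $A$ to show $\rho_1 = \mu_1^T A\mu_2 = \rho_2$ directly, and for rationality you invoke the general fact that a non-empty bounded rational polytope has a rational vertex. The paper instead observes that, when $\rho_G$ exists (and is positive), $\mathbf{1}$ lies in the column space of $A$, picks a rational $\boldsymbol{\eta}$ with $A\boldsymbol{\eta}=\mathbf{1}$, and obtains the closed-form $\rho_G = (\boldsymbol{\eta}^T\mathbf{1})^{-1}$, which delivers uniqueness and rationality simultaneously. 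Your version has the mild advantage of handling the degenerate case $\rho_G=0$ without a separate remark (the paper's argument implicitly uses $\rho\neq 0$ to conclude $\mathbf{1}\in\mathrm{col}(A)$), at the cost of appealing to polytope theory rather than an explicit formula; the paper's formula is arguably more informative about how $\rho_G$ depends on $G$. The final LP step for the rational maximizer $\eta$ is essentially identical in both.
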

\begin{proof}
Fix an arbitrary system of weights $\mu$ for which $(G,\mu)$ is
$\rho$-regular for some $\rho$. Let $A$ be the adjacency matrix of $G$,
$\boldsymbol\mu$ be the (column) vector comprised of vertex weights and
$\mathbf j$ be the identically one vector. Then the regularity condition
reads as $A\boldsymbol{\mu}=\rho\cdot \mathbf j$. Since $\mathbf j$ is in the
space spanned by the columns of $A$, there exists a {\bf rational} vector
$\boldsymbol{\eta}$ such that $A\boldsymbol{\eta}=\mathbf j$. Now, on the one
hand $\boldsymbol{\eta}^T A\boldsymbol{\mu}=\rho\cdot (\boldsymbol{\eta}^T
\mathbf{j})$ and, on the other hand, $\boldsymbol{\eta}^T
A\boldsymbol{\mu}=\mathbf{j}^T \boldsymbol{\mu}=1$ (the latter equality holds
since $\mu$ is a probability measure). Hence $\rho = (\boldsymbol{\eta}^T
\mathbf{j})^{-1}$ is a rational number not depending on $\mu$.

For the second part, we note that the linear program

$$
\begin{cases}
a\to\max & ~\\
\eta(v)\geq 0 & (v\in V(G))\\
\sum_{v}\eta(v)=1 &\\
e(v)=\rho & (v\in V(G))\\
P_3^N(v,w) \geq a & ((v,w)\not\in E(G))
\end{cases}
$$
with rational coefficients in the variables $\eta(v)$ is feasible since $\mu$
is its solution. Hence it also has an optimal solution with rational
coefficients.
\end{proof}

Let us now derive consequences.

\begin{corollary} \label{cor:finiteness}
For every rational $\rho$ there exists a finite triangle-free $\rho$-regular graph $G$ such that $a(G)$ attains the maximum value $a(\rho)$ among all such graphs.
\end{corollary}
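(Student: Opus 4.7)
The plan is to reduce the corollary to the finiteness result of Theorem \ref{thm:boundedeness} via the passage to weighted twin-free graphs, and then to blow up a rational-weighted extremal solution into an ordinary graph.

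First I would introduce the weighted analogue
$$
\tilde a(\rho) \df \sup\set{a(G,\mu)}{(G,\mu)\ \text{is a triangle-free}\ \rho\text{-regular weighted graph}},
$$
which is a priori at least $a(\rho)$, because ordinary graphs correspond to the uniform measure. The goal then splits into two parts: show that the sup defining $\tilde a(\rho)$ is actually attained, and then show that an extremal weighted configuration can be converted into an ordinary graph with the same $\rho$ and the same value of $a$.

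Next I would attain the sup. If $\tilde a(\rho)=0$, any single triangle-free $\rho$-regular graph suffices (one exists by the assumption implicit in $a(\rho)$ being defined). Otherwise, pick a sequence $(G_n,\mu_n)$ with $a(G_n,\mu_n)\to \tilde a(\rho)$. Passing to the twin-free factor $G_n^{\text{red}}$ preserves triangle-freeness, $\rho$-regularity and $a(\cdot,\cdot)$, so I may assume each $G_n$ is twin-free. For all large $n$ we have $a(G_n,\mu_n)\geq \tilde a(\rho)/2>0$, so Theorem \ref{thm:boundedeness} bounds $n(G_n)$ by an absolute constant depending only on $\tilde a(\rho)$. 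Hence only finitely many underlying simple graphs $G$ occur. For each such $G$, Lemma \ref{lem:linear} fixes $\rho_G$ to a single rational number (necessarily equal to $\rho$ for those $G$ that appear) and guarantees that the maximum of $a(G,\mu)$ over $\rho$-regular weights $\mu$ is attained by some rational vector $\eta$. Taking the best pair among the finitely many candidates produces $(G^*,\eta^*)$ with $a(G^*,\eta^*)=\tilde a(\rho)$ and $\eta^*$ rational.

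Finally, I would blow up. Choose $N$ to be a common denominator of $\eta^*$, so $\eta^*(v)=c_v/N$ with $c_v\in\mathbb{Z}_{\geq 0}$, and replace each vertex $v$ of $G^*$ by an independent set of $c_v$ twins, keeping adjacencies between distinct classes. The resulting simple graph $G'$ has $N$ vertices, is triangle-free (no triangles can appear among twins of a common class or by expanding a triangle-free graph in this way) and $\rho$-regular (each copy of $v$ has degree $\sum_{w\sim v} c_w = \rho N$, since $\sum_w \eta^*(w)\cdot[w\sim v]=\rho$). The non-adjacent pairs in $G'$ are of two kinds: twins of a single class $v$, whose common neighborhood has relative size $\rho\geq \tilde a(\rho)$, and pairs coming from distinct non-adjacent vertices $v,w$ of $G^*$, whose common neighborhood has relative size $P_3^N(v,w)\geq a(G^*,\eta^*)=\tilde a(\rho)$. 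Hence $a(G')=\tilde a(\rho)$, giving $a(\rho)\geq a(G')=\tilde a(\rho)\geq a(\rho)$ and so $G'$ is the required extremal ordinary graph.

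The only real substance is Theorem \ref{thm:boundedeness}, which turns the supremum into a maximum over a finite list of candidate underlying graphs; the rest amounts to verifying that the twin-free quotient, the rational-weight optimization of Lemma \ref{lem:linear}, and the blow-up each preserve all the parameters $\rho$, triangle-freeness, and $a$. No step should present serious difficulty beyond careful bookkeeping.
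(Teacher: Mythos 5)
Your proposal is essentially the same argument as the paper's: pass to weighted twin-free reductions, use Theorem~\ref{thm:boundedeness} to conclude that only finitely many underlying twin-free graphs can appear, invoke Lemma~\ref{lem:linear} to get a rational optimal weighting, and blow up. The only cosmetic difference is that you wrap this in an auxiliary quantity $\tilde a(\rho)$ and spell out the blow-up verification (that twin pairs created in the blow-up have common-neighborhood density $\rho\geq\tilde a(\rho)$, and that other pairs inherit their densities), which the paper leaves implicit; the substance is identical.
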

\begin{proof}
We can assume w.l.o.g. that $a(\rho)>0$. Let $\{G_n\}$ be an increasing
sequence of graphs such that $\lim_{n\to\infty} a(G_n)=a(\rho)$. Then Theorem
\ref{thm:boundedeness} implies that $\{G_n^{\text{red}}\}$ may assume only
finitely many values. Hence (by going to a subsequence) we can also assume
that all $G_n$ correspond to different vertex weights $\mu_n$ of the same
(twin-free) graph $G$. But now Lemma \ref{lem:linear} implies the existence
of rational weights $\eta(v)$, say $\eta(v)=\frac{N_v}{N}$ for integers
$N_v,N$ such that $a(G,\eta) = a(\rho)$. We convert $(G,\eta)$ to an ordinary
graph replacing every vertex $v$ with a cloud of $N_v$ twin clones.
\end{proof}

\begin{corollary} \label{cor:finiteness2}
For every $\epsilon>0$ there are only finitely many $\rho$ with
$a(\rho)\geq\epsilon$. In other words, 0 is the only accumulation point of
$\im(a)$.
\end{corollary}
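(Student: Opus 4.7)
The plan is to combine, in a mechanical way, the two main results of this section: the size bound of Theorem \ref{thm:boundedeness} and the rigidity statement of Lemma \ref{lem:linear}. Suppose $a(\rho) \geq \epsilon$. By Corollary \ref{cor:finiteness}, this value is attained by some finite triangle-free $\rho$-regular graph $G$. Passing to the twin-free reduction, I obtain a weighted triangle-free twin-free graph $(H,\mu_H)$ which is still $\rho$-regular and satisfies $a(H,\mu_H)\geq\epsilon$, since the twin-reduction trivially preserves all of these quantities. Theorem \ref{thm:boundedeness} then yields
\[
n(H) \;\leq\; (2\epsilon^{-1})^{1+\epsilon^{-1}} + 2\epsilon^{-1} \;=:\; N(\epsilon).
\]

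Now the crucial observation is that the underlying \emph{unweighted} graph $H$ lies in a finite set, namely the (finitely many) isomorphism classes of triangle-free twin-free graphs on at most $N(\epsilon)$ vertices. For each such candidate graph $H$, Lemma \ref{lem:linear} guarantees that there is at most one value $\rho_H$ for which $H$ admits a weighting making it $\rho$-regular. Therefore every $\rho$ in $\im(a)$ with $a(\rho) \geq \epsilon$ must appear in the finite set $\{\rho_H\}$ as $H$ ranges over these finitely many graphs, establishing the first sentence of the corollary.

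For the ``in other words'' restatement, note that $\im(a) \subseteq [0,1/2]$, so any accumulation point of $\im(a)$ is a finite real; if it were strictly positive, say equal to some $\delta > 0$, then every neighborhood $[\delta/2,1]$ would contain infinitely many values of $\rho$ with $a(\rho) \geq \delta/2$, contradicting what we just proved with $\epsilon := \delta/2$. Hence $0$ is the only possible accumulation point. The main obstacle has already been overcome in Theorem \ref{thm:boundedeness}, so there is no genuine difficulty remaining; the only routine check is that the twin-reduction step preserves $\rho$-regularity and the value of $a$, both of which are immediate from the definition of the twin relation and from the fact that twin vertices are automatically non-adjacent in a triangle-free graph.
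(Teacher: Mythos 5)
Your proof is correct and follows essentially the same route as the paper: bound the number of vertices in the twin-free reduction via Theorem \ref{thm:boundedeness}, then use Lemma \ref{lem:linear} to conclude that each of the finitely many possible skeletons determines at most one value of $\rho$. The paper presents this more tersely but the content is identical; your intermediate appeal to Corollary \ref{cor:finiteness} is harmless though not strictly needed (any $G$ with $a(G)\geq\epsilon$ would do).
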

\begin{proof}
Immediately follows from Theorem \ref{thm:boundedeness} and Lemma
\ref{lem:linear} since according to the latter, the edge density $\rho$ is
completely determined by the skeleton $G$ of a $\rho$-regular weighted graph
$(G,\mu)$.
\end{proof}

Now we prove that there are no ``inherently infinite'' triangle-free graphons
$W$ with $a(W)>0$. Since this result is somewhat tangential to the rest of
the paper, we will be rather sketchy and in particular we refer the reader to
\cite{Lov4} for all missing definitions.

A graphon $W\function{[0,1]\times [0,1]}{[0,1]}$ is {\em triangle-free} if
$$
\int\int\int W(x,y)W(y,z)W(x,z)dxdydz=0.
$$
Given a graphon $W$, let $P_3^N\function{[0,1]\times [0,1]}{[0,1]}$ be
defined by $P_3^N(x,y) = \int W(x,y)W(x,z)dz$; Fubini's theorem implies that
$P_3^N$ is defined a.e. and is measurable. We define $a(W)$ as the maximum
value $a$ such that
\begin{equation} \label{eq:a_w}
\lambda\of{\set{(x,y)\in [0,1]^2}{W(x,y)<1\Longrightarrow P_3^N(x,y)\geq a}}=1.
\end{equation}

To every finite vertex-weighted graph $(G,\mu)$ we can associate the
naturally defined {\em step-function graphon} $W_{G,\mu}$ (see \cite[\S
7.1]{Lov4} or Section \ref{sec:intr} above), and two graphons are {\em
isomorphic} if they have the same sampling statistics \cite[\S 7.3]{Lov4}.

\begin{theorem} \label{thm:no_graphon}
Let $W$ be  a triangle-free graphon. Then we have the following dichotomy:
either $a(W)=0$ or $W$ is isomorphic to $W_{G,\mu}$ for some finite
vertex-weighted triangle-free graph $(G,\mu)$.
\end{theorem}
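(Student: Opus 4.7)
The plan is to transfer the finite bound of Theorem \ref{thm:boundedeness} to graphons via a sampling plus compactness argument.

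As a preliminary reduction, I would first observe that the hypothesis $a(W)>0$ forces $W$ to be essentially $\{0,1\}$-valued. By Fubini, triangle-freeness gives $W(x,y)\cdot P_3^N(x,y)=0$ for a.e.\ $(x,y)$; so if $W(x,y)\in(0,1)$ on a positive-measure set, at such points we would simultaneously have $W(x,y)<1$ (which forces $P_3^N(x,y)\geq a(W)>0$ by \eqref{eq:a_w}) and $P_3^N(x,y)=0$, a contradiction. So from now on $W$ may be regarded as a measurable, triangle-free, $\{0,1\}$-valued graphon.

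Next, fix $\epsilon\in(0,a(W))$ and sample $x_1,\ldots,x_n$ i.i.d.\ uniformly from $[0,1]$, forming the random graph $G_n$ on $[n]$ whose edge set is $\{\{i,j\}:W(x_i,x_j)=1\}$. Almost surely $G_n$ is triangle-free, and every non-adjacent pair $(i,j)$ in $G_n$ satisfies $P_3^N(x_i,x_j)\geq a(W)$ (the null set of exceptional pairs in \eqref{eq:a_w} is a.s.\ avoided by the $\binom{n}{2}$ sampled pairs). Conditioned on $(x_i,x_j)$, the count $|N_{G_n}(i)\cap N_{G_n}(j)|$ is a sum of $n-2$ i.i.d.\ Bernoullis of mean $P_3^N(x_i,x_j)$; a Chernoff--Hoeffding estimate together with a union bound over all $\binom{n}{2}$ pairs then gives $a(G_n)\geq a(W)-\epsilon$ with probability $1-o(1)$.

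On this high-probability event, Theorem \ref{thm:boundedeness} bounds $n(G_n^{\mathrm{red}})\leq C:=C(a(W)-\epsilon)$. Since there are only finitely many twin-free triangle-free graphs on $\leq C$ vertices, along some subsequence $\{n_k\}$ the isomorphism type of $G_{n_k}^{\mathrm{red}}$ stabilizes at a fixed such graph $H$, and the corresponding twin-class weight vectors $\mu_{n_k}\in\Delta(V(H))$ admit a further convergent subsequence, $\mu_{n_k}\to\mu$. The standard sampling theorem \cite{Lov4} gives $G_{n_k}\to W$ a.s.\ in the graphon metric, while simultaneously $W_{G_{n_k}}=W_{H,\mu_{n_k}}\to W_{H,\mu}$; uniqueness of the graphon limit then yields $W\cong W_{H,\mu}$.

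The main technical step is the uniform concentration that guarantees $a(G_n)\geq a(W)-\epsilon$ for all non-adjacent pairs simultaneously, rather than in expectation. The $\{0,1\}$-reduction in the first paragraph is what makes this routine: it eliminates fractional noise and lets the exceptional null set of \eqref{eq:a_w} be absorbed into a single almost-sure event controlled by one Chernoff--Hoeffding plus union bound, after which Theorem \ref{thm:boundedeness} immediately pins down a finite skeleton.
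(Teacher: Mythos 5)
Your proof is correct and takes essentially the same route as the paper's sketch: sample from $W$, apply Chernoff plus a union bound to get $a(\rn{G_n})\geq a(W)-\epsilon$ with high probability, invoke Theorem \ref{thm:boundedeness} to bound $n(\rn{G_n}^{\text{red}})$, and use compactness of the (finitely many) possible skeletons together with weight-vector compactness and uniqueness of graphon limits. The one thing you add that the paper leaves implicit is the preliminary $\{0,1\}$-reduction of $W$ via Fubini; this is a genuinely useful clarification, since without it a sampled non-edge landing on a point with $W(x_i,x_j)\in(0,1)$ would have $P_3^N(x_i,x_j)=0$ and the Chernoff estimate $\prob{a(\rn{G_n})\leq a/2}\leq e^{-\Omega(n)}$ would fail, so you have actually tightened the paper's argument rather than merely reproduced it.
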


\begin{proof} (sketch)
Assume that $a(W)>0$, that is \eqref{eq:a_w} holds for some $a>0$. Let
$\rn{G_n}$ be the random sample from the graphon $W$; this is a probability
measure on the set $\mathcal G_n$ of triangle-free graphs on $n$ vertices up
to isomorphism. A standard application of Chernoff's bound along with
\eqref{eq:a_w} gives us that
\begin{equation}\label{eq:bound_a}
  \prob{a(\rn{G_n})\leq a/2} \leq \exp(-\Omega(n)).
\end{equation}

Now, if we equip $\prod_{n\in\mathbb N}\mathcal G_n$ with the product measure
$\prod_n \rn{G_n}$, then the fundamental fact from the theory of graph limits
is that the sequence of graphs $\rn{G_n}$ sampled according to this measure
converges to $W$ with probability 1, and the same holds for their twin-free
reductions $\rn{G_n^{\text{red}}}$. Since the series $\sum_n\exp(-\Omega(n))$
converges, Theorem \ref{thm:boundedeness} along with \eqref{eq:bound_a}
implies that the number of vertices in $\rn{G_n^{\text{red}}}$ is bounded,
also with probability 1. Then a simple compactness argument shows that it
contains a sub-sequence converging to $W_{G,\mu}$ for some finite weighted
graph $(G,\mu)$.
\end{proof}

\section{The proof of Theorem \ref{thm:main}}

We fix a triangle-free $\rho$-regular graph $G$, and for the reasons
explained in Remark \ref{rem:large_density}, we assume that $\rho\leq\frac
13$. We have to prove that $a(G)\leq a_0(\rho)$, that is there exists a pair
of non-adjacent vertices $u,v$ with $P_3^N(u,v)\leq a_0(\rho)$. We work in
the set-up of Section \ref{sec:boundedeness}, that is we replace $G$ with its
weighted twin-free reduction $(G,\mu)$; the weights $\mu$ will be dropped
from notation whenever it may not create confusion. We also let $a\df
a(G,\mu)>0$ throughout.

\subsection{$\rho\geq \rho_1$: exploiting combinatorial structure}
\label{sec:combinatorial}

The only way in which we will be using twin-freeness is the following claim
(that was already implicitly used in the proof of Theorem
\ref{thm:boundedeness}).

\begin{claim} \label{clm:upper_bound}
For any two non-adjacent vertices $u\neq v$, $P_3^N(u,v)\leq \rho-a$.
\end{claim}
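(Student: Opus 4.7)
The plan is to derive the bound from a single symmetric‐difference observation combined with triangle‐freeness. By $\rho$-regularity we have $\mu(N_G(u)) = \mu(N_G(v)) = \rho$, and hence
$$
\mu(N_G(v)\setminus N_G(u)) \;=\; \mu(N_G(v)) - \mu(N_G(u)\cap N_G(v)) \;=\; \rho - P_3^N(u,v).
$$
So the claim reduces to showing $\mu(N_G(v)\setminus N_G(u)) \geq a$.

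To produce such measure, I will exploit twin-freeness. Since $u\neq v$ and $G$ is twin-free, $N_G(u)\neq N_G(v)$ as sets; symmetry of degrees under $\rho$-regularity forces $\mu(N_G(u)\setminus N_G(v)) = \mu(N_G(v)\setminus N_G(u))$, and since all vertex weights in the reduced graph are positive, both sides of this difference contain a vertex of positive weight. Pick any such $w\in N_G(u)\setminus N_G(v)$. Because $(u,v)\notin E(G)$ we also have $w\neq v$, and by construction $(w,v)\notin E(G)$, so by the definition of $a=a(G,\mu)$,
$$
P_3^N(w,v) \;=\; \mu(N_G(w)\cap N_G(v)) \;\geq\; a.
$$

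The final and decisive step is a triangle-free inclusion: I will show
$$
N_G(w)\cap N_G(v) \;\subseteq\; N_G(v)\setminus N_G(u).
$$
Indeed, if $z\in N_G(w)\cap N_G(v)$ also belonged to $N_G(u)$, then $u,w,z$ would be mutually adjacent (using $w\in N_G(u)$ and $z\in N_G(u)\cap N_G(w)$), contradicting triangle-freeness of $G$. Combining this inclusion with the lower bound from the previous paragraph gives
$$
\rho - P_3^N(u,v) \;=\; \mu(N_G(v)\setminus N_G(u)) \;\geq\; \mu(N_G(w)\cap N_G(v)) \;\geq\; a,
$$
which is exactly the claim.

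The only subtlety is ensuring that the vertex $w$ can be chosen with positive weight; this is where twin-freeness is essential, and it is handled by the standing assumption that all weights in the reduced graph $(G,\mu)$ are strictly positive (any zero-weight vertex can be deleted without affecting $\rho$-regularity or $a$). Everything else is a one-line consequence of $\rho$-regularity and the triangle-free condition, so no real obstacle is expected.
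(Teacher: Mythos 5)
Your proof is correct and follows the paper's own argument almost verbatim: both rewrite $\rho - P_3^N(u,v)$ as $\mu(N_G(v)\setminus N_G(u))$, both extract a witness $w\in N_G(u)\setminus N_G(v)$ from twin-freeness together with equal degrees, and both then use triangle-freeness to push $N_G(w)\cap N_G(v)$ into $N_G(v)\setminus N_G(u)$ so that the definition of $a$ applies. The only difference is notational (set-theoretic vs.\ the flag $\bar P_3^{N,c}$).
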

\begin{proof}
First we have $P_3^N(u,v)+\bar P_3^{N,c}(u,v) = e(v)=\rho$. Thus it remains
to prove that $\bar P_3^{N,c}(u,v)\geq a$. But since $u$ and $v$ are not
twins and $e(u)=e(v)$, there exists a vertex $w\in N_G(u)\setminus N_G(v)$.
Then $a\leq P_3^N(v,w)\leq \bar P_3^{N,c}(u,v)$, the last inequality
holds since $G$ is triangle-free.
\end{proof}

We now fix, for the rest of the proof, two non-adjacent vertices $v_1,v_2$
with $P_3^N(v_1,v_2)=a$. Let $P\df N_G(v_1)\cap N_G(v_2)$ (thus $\mu(P) =
P_3^N(u,v)=a$) and we also let $I\df V(G)\setminus (N_G(v_1)\cup N_G(v_2))$
(note that $v_1,v_2\in I$). We can easily compute $\mu(I)=I_3^N(v_1,v_2)$ by
inclusion-exclusion as follows:
\begin{equation}\label{eq:i3N}
  I_3^N(v_1,v_2) =1- e(v_1)-e(v_2)+P_3^N(v_1,v_2) =1-2\rho+a.
\end{equation}

\begin{claim} \label{clm:v3}
For any $w\in P$ there exists $v_3\in I$ such that $(w,v_3)\not\in E$.
\end{claim}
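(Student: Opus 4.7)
The plan is a short argument by contradiction using only $\rho$-regularity and the inclusion-exclusion identity \eqref{eq:i3N}. Suppose that every vertex of $I$ is adjacent to $w$, i.e.\ $I \subseteq N_G(w)$; the goal is to contradict the standing hypotheses $\rho \leq 1/3$ and $a > 0$.

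First I would write down the two relevant measures: by $\rho$-regularity, $\mu(N_G(w)) = e(w) = \rho$, and by \eqref{eq:i3N}, $\mu(I) = 1 - 2\rho + a$. Under the assumption $I \subseteq N_G(w)$, these would have to satisfy $1 - 2\rho + a = \mu(I) \leq \mu(N_G(w)) = \rho$, i.e.\ $\rho \geq (1+a)/3 > 1/3$, contradicting $\rho \leq 1/3$. Equivalently, one can read off directly the inequality
$$
\mu(I \setminus N_G(w)) \;\geq\; \mu(I) - \mu(N_G(w)) \;=\; 1 - 3\rho + a \;>\; 0,
$$
which shows that $I \setminus N_G(w)$ is nonempty, and any vertex $v_3$ in it is as required.

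There is essentially no obstacle; the claim is really just a pigeonhole: a set of relative size $1 - 2\rho + a$ cannot be covered by a single neighborhood of size $\rho$ when $\rho \leq 1/3$ and $a > 0$. The only remark worth making is that triangle-freeness in fact gives the sharper containment $N_G(w) \subseteq I$, because any common neighbor of $w$ and $v_i$ ($i=1,2$) would close a triangle with the edge $(w, v_i)$; with this the inequality above becomes an equality, and the slack $1 - 3\rho + a$ is sharp at the Petersen parameters $\rho = 3/10$, $a = 1/10$, where it equals exactly $2/10$.
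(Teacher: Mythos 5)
Your argument is exactly the paper's: from \eqref{eq:i3N} and $\rho \leq 1/3$, $a>0$ one gets $\mu(I) = 1-2\rho+a > \rho = \mu(N_G(w))$, so $I \not\subseteq N_G(w)$. The extra observation that triangle-freeness even gives $N_G(w) \subseteq I$ (making $\mu(I\setminus N_G(w)) = 1-3\rho+a$ exactly) is correct but not needed for the claim.
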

\begin{proof}
The assumptions $\rho\leq \frac 13$ and $a>0$ imply, along with
\eqref{eq:i3N}, that $I_3^N(v_1,v_2)>\rho$. As $e(w)=\rho$, Claim
\ref{clm:v3} follows.
\end{proof}

Before proceeding further, let us remark that $a_0(\rho)\geq\frac{\rho}3$ for
$\rho\in [\rho_1,1/3]$ (verifications of computationally unpleasant
statements like this one can be found in the Maple worksheet at\\ {\tt
http://people.cs.uchicago.edu/\~{}razborov/files/tfsr.mw}). Hence we can and
will assume w.l.o.g. that
\begin{equation}\label{eq:first_bound}
  a>\frac{\rho}3.
\end{equation}

\begin{claim} \label{clm:w}
For any $v_3\in I$ we have $S_4^I(v_1,v_2,v_3)>0$, that is there exists a
vertex $w\in P$ adjacent to $v_3$.
\end{claim}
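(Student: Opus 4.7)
\textbf{Proof plan for Claim \ref{clm:w}.} I will argue by contradiction: suppose $v_3\in I$ has no neighbour in $P$. The plan is to derive two opposing bounds on $\mu(N_G(v_3)\cap I)$, one coming from the degree of $v_3$ and one from the structure of the neighbourhoods of vertices in $P$, and then use the standing assumption $a>\rho/3$ from \eqref{eq:first_bound} to reach a contradiction.

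First I would set up the partition $V(G)=I\sqcup A\sqcup B\sqcup P$, where $A\df N_G(v_1)\setminus P$ and $B\df N_G(v_2)\setminus P$; note $\mu(A)=\mu(B)=\rho-a$. Since $(v_1,v_3)$ is a non-adjacent pair we have $P_3^N(v_1,v_3)\geq a$, and by the assumption that $v_3$ avoids $P$ the corresponding common neighbours must lie in $A$; hence $\mu(N_G(v_3)\cap A)\geq a$. Symmetrically $\mu(N_G(v_3)\cap B)\geq a$. Because $A$ and $B$ are disjoint and $v_3$ has total degree $\rho$, this immediately forces
$$
\mu(N_G(v_3)\cap I)\ \leq\ \rho-2a.
$$

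Second, I would exploit triangle-freeness at a vertex $w\in P$ (which exists because $\mu(P)=a>0$): since $w$ is adjacent to both $v_1$ and $v_2$, it can have no neighbour in $A\cup B\cup(P\setminus\{w\})$, so $N_G(w)\subseteq\{v_1,v_2\}\cup I$. Now $(v_3,w)$ is a non-adjacent pair under our contradiction hypothesis, so $P_3^N(v_3,w)\geq a$. But common neighbours of $v_3$ and $w$ lie in $N_G(w)\subseteq\{v_1,v_2\}\cup I$, and $v_3\in I$ is non-adjacent to $v_1,v_2$, so these common neighbours actually lie in $N_G(v_3)\cap I$. Consequently $\mu(N_G(v_3)\cap I)\geq a$.

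Combining the two inequalities gives $a\leq \rho-2a$, i.e.\ $a\leq \rho/3$, which directly contradicts \eqref{eq:first_bound}. Hence $v_3$ must have some neighbour in $P$, which is precisely the assertion $S_4^{\mathcal I}(v_1,v_2,v_3)>0$. There is no real obstacle here; the only subtle point is recognizing that the minimality of $a=P_3^N(v_1,v_2)$ must be applied three times, once to each of the pairs $(v_1,v_3)$, $(v_2,v_3)$, and $(v_3,w)$, and that the triangle-free structure forces $N_G(w)$ into the narrow set $\{v_1,v_2\}\cup I$ so that the bound from the pair $(v_3,w)$ actually contradicts the degree budget of $v_3$.
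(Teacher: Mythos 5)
Your proof is correct and rests on the same ingredients as the paper's: three applications of the minimality bound $P_3^N\geq a$ to the non-adjacent pairs $(v_1,v_3)$, $(v_2,v_3)$, $(v_3,w)$, the degree budget $e(v_3)=\rho$, triangle-freeness to force disjointness, and the standing assumption $a>\rho/3$ from \eqref{eq:first_bound}. The paper carries out the count as a one-line inclusion--exclusion inside $N(v_3)$ using just a single $w\in P$ non-adjacent to $v_3$, which yields the slightly stronger quantitative conclusion $S_4^{\mathcal I}(v_1,v_2,v_3)\geq 3a-\rho$; your contradiction from the stronger hypothesis that $v_3$ misses all of $P$, organized via the partition $V=I\sqcup A\sqcup B\sqcup P$, is an equivalent rearrangement of the same computation.
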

\begin{proof}
Since $P$ is non-empty, we can assume w.l.o.g. that $\exists w\in P\
((v_3,w)\not \in E)$ (otherwise we are done). Now we have the computation
(again, since $G$ is triangle-free)
\begin{equation} \label{eq:S_bound}
\longeq{&&\rho = e(v_3) \geq P_3^N(v_3,v_1)+P_3^N(v_3,v_2)+ P_3^N(v_3,w)-S_4^I(v_1,v_2,v_3)
\\  && \longeqskip\geq
3a-S_4^I(v_1,v_2,v_3).}
\end{equation}
The claim now follows from \eqref{eq:first_bound}.
\end{proof}

Let now $c\df |P|$ be the {\bf size} of $P$ (weights are ignored). Claims
\ref{clm:v3} and \ref{clm:w} together imply that $c\geq 2$. The rest of the
analysis depends on whether $c=2$, $c=3$ or $c\geq 4$.

\subsubsection{$c=2$}

Let $P=\{w,w'\}$, where $\mu(w)\geq \mu(w')$, and note that $\mu(w')\leq
\frac a2$. By Claim \ref{clm:v3}, there exists $v_3\in I$ such that
$(w,v_3)\not\in E$. We have $S_4^I(v_1,v_2,v_3)\leq \mu(w')\leq \frac a2$.
Along with \eqref{eq:S_bound}, this gives us the bound
\begin{equation} \label{eq:twofifth}
a\leq \frac 25\rho.
\end{equation}

By Claim \ref{clm:w}, for any $v_3\in I$ we have either $(w,v_3)\in E(G)$ or
$(w',v_3)\in E(G)$. In other words, the neighbourhoods of $v_1,v_2,w,w'$
cover the whole graph or, equivalently, $I_3^N(v_1,v_2)+I_3^N(w,w')=1$. Now,
$I_3^N(v_1,v_2)=1-2\rho+a$ by \eqref{eq:i3N}, and for $(w,w')$ this
calculation still works in the ``right'' direction: $I_3^N(w,w')=1-2\rho
+P_3^N(w,w')\geq 1-2\rho+a$. Thus we get $a\leq 2\rho-\frac 12$. Along with
\eqref{eq:twofifth}, we get that $a\leq\min\of{\frac 25\rho, 2\rho-\frac
12}\leq a_0(\rho)$ (see the Maple worksheet) and this completes the analysis
of the case $c=2$.

\subsubsection{$c=3$} \label{sec:c3}

Let $P=\{w_1,w_2,w_3\}$. We abbreviate $F^{\mathcal I}_{\{i\}}(w_1,w_2,w_3)$
to $F_i$, $F^{\mathcal I}_{\{i,j\}}(w_1,w_2,w_3)$ to $F_{ij}$ and
$F^{\mathcal I}_{\{1,2,3\}}(w_1,w_2,w_3)$ (= $S_4^{\mathcal I}(w_1,w_2,w_3)$)
to $f_3$. In our claims below we will always assume that
$\{i,j,k\}=\{1,2,3\}$ is an arbitrary permutation on three elements.

We begin with noticing that Claim \ref{clm:upper_bound} applied to the pair
$(w_i,w_k)$ gives us $F_{ik}+f_3\leq \rho-a$ that can be re-written (since
$F_i+F_{ij}+F_{ik}+f_3=e(w_i)=\rho$) as
\begin{equation} \label{eq:ivsij}
F_i+F_{ij} \geq a.
\end{equation}
On the other hand, the bound $P_3^N(w_i,w_j)\geq a$ re-writes as
\begin{equation}\label{eq:opposite}
  F_{ij} + f_3\geq a.
\end{equation}

We also note that \eqref{eq:ivsij} (along with its analogue obtained by
changing $F_i$ to $F_j$) implies
\begin{equation} \label{eq:ivsj}
F_{ij} = 0 \Longrightarrow (F_i\geq a \land F_j\geq a).
\end{equation}

\begin{claim} \label{clm:ijvsk}
$F_{ij}>0\Longrightarrow F_k\geq a.$
\end{claim}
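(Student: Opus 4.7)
My plan is to produce a specific witness vertex $u$ for the hypothesis $F_{ij}>0$, and then show that its common neighbourhood with $w_k$ sits entirely inside the set counted by $F_k$.

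The first step is to unfold $F_{ij} = F^{\mathcal I}_{\{i,j\}}(w_1,w_2,w_3)$ to extract an actual vertex $u\in V(G)$ of positive weight that is adjacent to $w_i$ and $w_j$ but not to $w_k$. Here I need to be slightly careful with the collision convention: any random $\rn w$ colliding with some $w_\ell\in P$ is treated as non-adjacent to $w_\ell$, and since $P$ is independent it is also non-adjacent to the other two, so every such collision contributes to $F_\emptyset$ rather than to $F_{ij}$. Similarly $v_1$ and $v_2$ are adjacent to all of $P$ and so contribute only to $f_3$. Therefore $F_{ij}>0$ forces the existence of $u\in V(G)\setminus (P\cup\{v_1,v_2\})$ with $\mu(u)>0$ and the desired adjacencies.

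Next I would prove the inclusion
\[ N_G(u)\cap N_G(w_k) \;\subseteq\; \{\,v\in V(G) : v\ \text{contributes to}\ F_k\,\}. \]
Given $v\in N_G(u)\cap N_G(w_k)$, if $v\sim w_i$ then $\{u,v,w_i\}$ is a triangle, contradicting triangle-freeness; so $v\not\sim w_i$, and symmetrically $v\not\sim w_j$. Independence of $P$ together with $v\sim w_k$ rules out $v\in P$, while the same triangle trick (applied to $u,w_i,v_1$ and to $u,w_i,v_2$) rules out $v\in\{v_1,v_2\}$. Hence $v$ is an external vertex adjacent to $w_k$ but to neither $w_i$ nor $w_j$, which is exactly what $F_k^{\mathcal I}(w_1,w_2,w_3)$ counts.

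Finally, since $(u,w_k)\notin E(G)$, the very definition of $a=a(G,\mu)$ yields $P_3^N(u,w_k)\geq a$, and combining with the inclusion gives
\[ F_k \;\geq\; \mu\bigl(N_G(u)\cap N_G(w_k)\bigr) \;=\; P_3^N(u,w_k) \;\geq\; a. \]
The content is thus a single clean application of triangle-freeness; the only real obstacle is the careful bookkeeping of the twin/collision convention needed to produce the witness $u$.
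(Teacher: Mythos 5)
Your proof is correct and takes essentially the same route as the paper: the paper's one-liner picks a witness $v$ for $F_{ij}>0$ and writes $a\leq P_3^N(w_k,v)\leq F_k$, where the second inequality is exactly your inclusion $N_G(u)\cap N_G(w_k)\subseteq\{v:v\sim w_k,\ v\not\sim w_i,\ v\not\sim w_j\}$ via triangle-freeness. You merely spell out the collision/twin bookkeeping and the inclusion that the paper leaves implicit.
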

\begin{proof}
Let $v$ be any vertex contributing to $F_{ij}$, that is $(w_i,v), (w_j,v)\in
E(G)$ while $(w_k,v)\not\in E(G)$. Then $a\leq P_3^N(w_k,v)\leq F_k$.
\end{proof}

Now, \eqref{eq:ivsj} along with Claim \ref{clm:ijvsk} imply that there exist
at least two indices $i\in [3]$ with $F_i\geq a$. Assume w.l.o.g. that
$F_1,F_2\geq a$. Our goal (that, somewhat surprisingly, is the most
complicated part of the analysis) is to show that in fact $F_3\geq a$ as
well.

\begin{claim} \label{clm:non_zero}
$F_i>0$.
\end{claim}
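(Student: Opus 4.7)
The plan is a proof by contradiction, combined with a quick reduction to a single non-trivial index. Since we already have $F_1,F_2\geq a>0$ from the discussion preceding the claim, the only case that really needs verification is $i=3$. So I will assume, towards a contradiction, that $F_3=0$ and aim to deduce $a\leq\rho/3$, which will contradict the standing assumption \eqref{eq:first_bound}.

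The first step is to squeeze out a second vanishing quantity: by the contrapositive of Claim \ref{clm:ijvsk} applied to the pair $(1,2)$ (with $k=3$), the assumption $F_3=0<a$ forces $F_{12}=0$. With both $F_3=F_{12}=0$ in hand, I would then run the following double-counting around the vertex $w_3$. The partition of $V(G)$ according to the adjacency pattern to $\{w_1,w_2,w_3\}$ gives the identity
$$
F_3+F_{13}+F_{23}+f_3 \;=\; e(w_3)\;=\;\rho,
$$
which, using $F_3=0$, becomes $F_{13}+F_{23}+f_3=\rho$. Now apply Claim \ref{clm:upper_bound} to the two non-adjacent pairs $(w_1,w_3)$ and $(w_2,w_3)$ to obtain
$$
F_{13}+f_3\leq\rho-a, \qquad F_{23}+f_3\leq\rho-a.
$$
Adding these and subtracting the previous identity yields $f_3\leq\rho-2a$. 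On the other hand, \eqref{eq:opposite} with $(i,j)=(1,2)$ and $F_{12}=0$ gives $f_3\geq a$. Putting these together, $a\leq f_3\leq\rho-2a$, i.e.\ $a\leq\rho/3$, contradicting \eqref{eq:first_bound}.

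I do not anticipate a real obstacle here, only a bookkeeping one: the whole argument is a three-line linear combination, and the only delicate point is to make sure the decomposition $F_3+F_{13}+F_{23}+f_3=\rho$ is applied correctly in the flag-sampling convention (in particular, that collisions of the random sample with the labelled vertices $w_1,w_2,w_3$ contribute to $F_\emptyset$, via the ``non-adjacent twin'' rule, and hence do not affect the identity for $e(w_3)$). The main cleverness is simply recognising that the two uses of Claim \ref{clm:upper_bound} around $w_3$ combine with the degree identity to isolate $f_3$, after which \eqref{eq:opposite} closes the loop.
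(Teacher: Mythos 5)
Your proof is correct and is essentially the paper's own argument: both assume $F_3=0$, derive $F_{12}=0$ via Claim~\ref{clm:ijvsk}, use \eqref{eq:opposite} to get $f_3\geq a$, and combine Claim~\ref{clm:upper_bound} applied to the pairs $(w_1,w_3),(w_2,w_3)$ with the degree identity $e(w_3)=\rho$ to reach $\rho\geq 3a$, contradicting \eqref{eq:first_bound}. The paper packages the Claim~\ref{clm:upper_bound} step as the already-recorded inequality \eqref{eq:ivsij} (giving $F_{13},F_{23}\geq a$ directly), whereas you isolate $f_3\leq\rho-2a$ first; the two are the same linear combination, just presented in a different order.
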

\begin{proof}
When $i=1.2$, we already have the stronger fact $F_i\geq a$ so we are only
left to show that $F_3>0$. Assume the contrary. Then $F_{12}=0$ by Claim
\ref{clm:ijvsk}, hence $f_3\geq a$ by \eqref{eq:opposite}. Also, $F_{13}\geq
a$ and $F_{23}\geq a$ by \eqref{eq:ivsij} (with $i=3$). Summing all this up,
$\rho=e(w_3)=F_{13}+F_{23}+f_3\geq 3a$, contrary to the assumption
\eqref{eq:first_bound}.
\end{proof}

The next claim, as well as Claim \ref{clm:C4} below, could have been also
written very concisely at the expense of introducing a few more flags; we did
not do this since those flags are not used anywhere else in the paper.

\begin{claim} \label{clm:p4}
There is an edge between {\rm [}the sets of vertices corresponding to{\rm ]}
$F_i$ and $F_j$.
\end{claim}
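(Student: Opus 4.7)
The plan is to prove Claim \ref{clm:p4} by contradiction. For brevity write $V_S\subseteq V(G)\setminus P$ for the set of vertices $u$ with $\{s\in[3]:u\sim w_s\}=S$, so the ``set corresponding to $F_S$'' is essentially $V_S$. Suppose there is no edge between $V_{\{i\}}$ and $V_{\{j\}}$ and let $k$ be the third index. The aim is to derive a contradiction either with the minimality of $a=P_3^N(v_1,v_2)$ or with the lower bound $a>\rho/3$ from \eqref{eq:first_bound}.

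First I would extract the structural consequences of the hypothesis. Triangle-freeness forbids $N(x)$, for $x\in V_{\{i\}}$, from meeting any $V_S$ with $i\in S$; combined with the no-edge hypothesis this gives $N(x)\subseteq V_\emptyset\cup V_{\{k\}}\cup V_{\{j,k\}}$. Applying the minimality of $a$ to the non-adjacent pair $(x,w_j)$ then yields $P_3^N(x,w_j)=\mu(N(x)\cap V_{\{j,k\}})\ge a$, so $F_{jk}\ge a$; the symmetric argument gives $F_{ik}\ge a$.

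Next I would pick $u\in V_{\{j,k\}}$ (non-empty by the previous step); triangle-freeness confines $N(u)\subseteq V_\emptyset\cup V_{\{i\}}$. Applying the inclusion-exclusion scheme used to derive \eqref{eq:S_bound} in the proof of Claim \ref{clm:w} to $e(u)=\rho$ with the three non-neighbors $w_i,v_1,v_2$ of $u$, the pair-overlap terms for $(w_i,v_s)$ vanish by triangle-freeness (since $w_i\sim v_s$), while the $(v_1,v_2)$ pair-overlap equals $\mu(N(u)\cap P)=\mu(w_j)+\mu(w_k)=a-\mu(w_i)$ by the identity $\mu(P)=a$. This yields
$$\rho\ge 3a-(a-\mu(w_i))=2a+\mu(w_i),$$
and the symmetric choice $u'\in V_{\{i,k\}}$ gives $\rho\ge 2a+\mu(w_j)$.

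To close the remaining gap to $\rho>3a$, I would adjoin a fourth non-neighbor of $u$: naturally an $x\in V_{\{i\}}\setminus N(u)$, which exists whenever $F_i>\mu(N(u)\cap V_{\{i\}})$. All overlaps involving $w_i$ again vanish by triangle-freeness, and the new overlaps $|N(u)\cap N(x)\cap N(v_s)|$ lie in $V_\emptyset\cap N(u)$. The main obstacle will be bounding these $V_\emptyset$-overlaps sharply enough: the crude bound via Claim \ref{clm:upper_bound} gives only $\le\rho-a$ each, which is insufficient, so a finer second-moment-style estimate exploiting that $N(v_s)\cap V_\emptyset$ and $N(x)\cap V_\emptyset$ overlap densely through $P\subset V_\emptyset$ will be required. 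In the boundary case $V_{\{i\}}\subseteq N(u)$ (no such $x$ available) one would instead use a fourth non-neighbor from $V_{\{i,k\}}$, or exhibit directly a non-adjacent pair in $V_{\{j,k\}}\cup V_{\{i,k\}}\cup\{v_1\}$ whose common neighborhood is confined to $V_\emptyset$ and has mass $<a$, contradicting the minimality of $a$. The author's note that this claim ``could have been written concisely at the expense of introducing a few more flags'' suggests the bookkeeping is cleanest when organized via products of flag densities.
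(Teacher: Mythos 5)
Your opening steps are sound: under the no-edge hypothesis, the facts $F_{jk}\ge a$ and $F_{ik}\ge a$ follow exactly as you argue, and the subsequent derivation $\rho\ge 2a+\mu(w_i)$ from $e(u)=\rho$ by inclusion--exclusion is also correct. But the argument does not close: $\rho\ge 2a+\mu(w_i)$ falls short of the contradiction target $\rho\ge 3a$, and your final paragraph is a frank admission that adjoining a fourth non-neighbor of $u$ produces pair-overlap terms in $V_\emptyset$ that you cannot control (the ``finer second-moment-style estimate'' you invoke is never produced, the aside $P\subset V_\emptyset$ contradicts your own definition $V_S\subseteq V(G)\setminus P$, and the boundary case is left entirely open). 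As written this is not a proof; it is a plan with an acknowledged hole in the middle.

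What you may not have noticed is that the contradiction was already within reach after your first two paragraphs, without ever looking at $e(u)$. Once $F_{jk},F_{ik}\ge a$, the expansion $\rho=e(w_k)=F_k+F_{ik}+F_{jk}+f_3$ together with $\rho<3a$ from \eqref{eq:first_bound} forces $F_k+f_3<a$, so in particular $F_k<a$. On the other hand \eqref{eq:opposite} gives $F_{ij}\ge a-f_3>0$, and Claim \ref{clm:ijvsk} then upgrades $F_{ij}>0$ to $F_k\ge a$, a contradiction. That said, the paper does not proceed by contradiction at all: it argues directly. Taking $i\in\{1,2\}$ so that $F_i\ge a$, it expands $\rho=e(w_i)=F_i+F_{ij}+F_{ik}+f_3$ and uses $F_i\ge a$, $F_{ij}+f_3\ge a$ (again \eqref{eq:opposite}), and $\rho<3a$ to conclude $F_{ik}<a$. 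Then any $v$ witnessing $F_j>0$ (Claim \ref{clm:non_zero}) satisfies $P_3^N(v,w_i)\ge a$, and triangle-freeness confines all of that mass to $F_i$ and $F_{ik}$; since $F_{ik}<a$, $v$ must have a neighbor in $F_i$. This direct route is shorter and sidesteps the case splitting that your sketch would eventually need.
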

\begin{proof}
Since $\{i,j\}\cap \{1,2\} \neq \emptyset$, we can assume w.l.o.g. that
$i=1$. We have
$$
\rho=e(w_1) =F_1+F_{1j}+F_{1k}+f_3
$$
and $F_1\geq a,\ F_{1j}+f_3\geq a$ (by \eqref{eq:opposite}). Hence $F_{1k}<a$ due to
\eqref{eq:first_bound}. Let now $v$ be an arbitrary vertex contributing to
$F_j$ that exists by Claim \ref{clm:non_zero}. We have $P_3^N(v,w_1)\geq a$,
and all contributions to it come from either $F_{1k}$ or $F_1$. Since
$F_{1k}<a$, $v$ must have at least one neighbor in $F_1$.
\end{proof}

\begin{claim} \label{clm:almost_last}
$F_i+F_{ij}+F_{ik}\geq 2a$.
\end{claim}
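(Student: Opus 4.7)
The plan is to reduce the claim, via the degree identity $F_i + F_{ij} + F_{ik} = e(w_i) - f_3 = \rho - f_3$, to showing $f_3 \leq \rho - 2a$. A natural sufficient condition is that some pair $(i,j)$ satisfies $F_{ij} \geq a$, since Claim~\ref{clm:upper_bound} applied to the non-adjacent pair $(w_i, w_k)$ would then give $F_{ij} + f_3 = P_3^N(w_i, w_k) \leq \rho - a$, whence $f_3 \leq \rho - 2a$.

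As the first step I would establish the structural inequality $\mu(N(v) \cap F_{ij}) \geq 3a - \rho$ for every $v \in F_k$. Triangle-freeness through $w_k$ forces $N(v)$ to be disjoint from $F_k \cup F_{k\ast} \cup f_3$, so $N(v) \subseteq F_\emptyset \cup F_i \cup F_j \cup F_{ij}$. Three lower bounds then apply: $\mu(N(v) \cap (F_i \cup F_{ij})) \geq a$ from $P_3^N(w_i, v) \geq a$ (contributors are confined to $F_i \cup F_{ij}$ by another triangle-free argument through $v$ and $w_k$), the symmetric bound for $w_j$, and $\mu(N(v) \cap F_\emptyset) \geq a$ from $P_3^N(v_1, v) \geq a$ together with $N(v_1) \subseteq F_\emptyset$. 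Adding the three and subtracting the partition equality $\mu(N(v)) = \rho$ yields the bound. Since $F_k > 0$ by Claim~\ref{clm:non_zero}, this already forces every $F_{ij} > 0$; combined with Claim~\ref{clm:ijvsk} one obtains $F_\ell \geq a$ for every $\ell \in \{1,2,3\}$.

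Next I would carry out a double-counting argument which forces $f_3 = \mu(v_1) + \mu(v_2)$. Triangle-freeness through each $w_\ell \in P$ gives $N(u) \subseteq F_\emptyset$ for every $u \in f_3$, so $\mu(N(u) \cap (F_\emptyset \setminus P)) = \rho - a$; hence $\int_{u \in f_3} \mu(N(u) \cap (F_\emptyset \setminus P))\, d\mu(u) = (\rho - a) f_3$. On the other hand, splitting $F_\emptyset \setminus P = (N(v_1) \setminus P) \sqcup (N(v_2) \setminus P)$ and using the per-vertex estimate $\mu(N(v_s) \cap N(u) \setminus P) \leq \rho - 2a$ (from Claim~\ref{clm:upper_bound} together with $P \subseteq N(v_s) \cap N(u)$) for each $u \in f_3 \setminus \{v_s\}$, the same integral is bounded above by $(\mu(v_1) + \mu(v_2))(\rho - a) + 2(\rho - 2a)(f_3 - \mu(v_1) - \mu(v_2))$. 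Equating the two expressions yields $(3a - \rho)(f_3 - \mu(v_1) - \mu(v_2)) \leq 0$, and \eqref{eq:first_bound} forces $f_3 = \mu(v_1) + \mu(v_2)$.

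The hard part will be converting $f_3 = \mu(v_1) + \mu(v_2)$ into $\mu(v_1) + \mu(v_2) \leq \rho - 2a$. Since the double-counting argument above is formally symmetric under swapping $\{v_1,v_2\}$ with a non-adjacent pair from $P$, I would try running the analogue with $(w_i,w_j)$ and the set $F_{ij} \cup f_3$ (of measure $F_{ij}+f_3 \geq a > \rho/3$, so the key inequality $3a-\rho>0$ still applies) in place of $(v_1,v_2)$ and $P$; cross-comparing the resulting identities with the upper bounds $F_{ij} \leq \rho - 2a$ from the proof of Claim~\ref{clm:p4} and with the aggregate $F_{12}+F_{13}+F_{23}+2f_3 = 5\rho - 1 - a$ coming from $F_\emptyset = 2\rho - a$ and $\sum_\ell e(w_\ell) = 3\rho$ should pin down the vertex weights $\mu(v_s),\mu(w_\ell)$ enough to close the argument. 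The delicacy is that the linear constraints produced so far are compatible with $\mu(v_1)+\mu(v_2)$ slightly exceeding $\rho - 2a$, so the extra leverage must come from non-linear interaction between the vertex weights and the sharp inequality in Claim~\ref{clm:upper_bound}.
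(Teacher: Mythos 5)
Your proposal is incomplete, and you say so yourself in the final paragraph: after reducing the claim to $f_3\leq\rho-2a$ and deriving the (correct and rather nice) identity $f_3=\mu(v_1)+\mu(v_2)$, you are unable to bound $\mu(v_1)+\mu(v_2)$, and you observe that the linear information you have accumulated is not enough to do so. That acknowledged gap is real; the argument does not close as written.

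The paper's actual proof goes in a different and much shorter direction, and it is worth seeing why it sidesteps the whole issue. It does \emph{not} try to bound $f_3$ or the weights of $v_1,v_2$ at all. Instead it invokes Claim~\ref{clm:p4} (just established) to produce an \emph{edge} $(v,v')$ with $v\in F_k$ and $v'\in F_j$. Since $v$ is adjacent to $w_k$ and not to $w_i$, and $G$ is triangle-free, every common neighbour of $w_i$ and $v$ lies in $F_i\cup F_{ij}$; symmetrically every common neighbour of $w_i$ and $v'$ lies in $F_i\cup F_{ik}$. Both of these common-neighbourhoods have measure at least $a$ (by the extremality of $(v_1,v_2)$), and — this is the key step you are missing — they are \emph{disjoint}, because they sit inside $N(v)$ and $N(v')$ respectively and $N(v)\cap N(v')=\emptyset$ since $(v,v')\in E(G)$. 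Adding them gives $2a\leq F_i+F_{ij}+F_{ik}$ directly. So the leverage comes from triangle-freeness applied to a single edge inside $I$, not from pinning down vertex weights. Your intermediate computations (the bound $\mu(N(v)\cap F_{ij})\geq 3a-\rho$, and the identity $f_3=\mu(v_1)+\mu(v_2)$) appear correct and could conceivably be useful elsewhere, but they are not what this claim needs, and the route through $\mu(v_1)+\mu(v_2)\leq\rho-2a$ should be abandoned in favour of the disjoint-neighbourhoods trick.
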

\begin{proof}
Let $v,v'$ be as in Claim \ref{clm:p4} with $i:=k$, i.e. $(v,v')\in E(G)$,
$v$ contributes to $F_k$ and $v'$ contributes to $F_j$. Then $2a\leq
P_3^N(w_i,v) + P_3^N(w_i,v')\leq F_i + F_{ij} + F_{ik}$ simply because
$(v,v')$ is an edge, and this implies that the sets corresponding to
$P_3^N(w_i,v),\ P_3^N(w_i,v')$ are disjoint.
\end{proof}

\begin{claim} \label{clm:fij}
$F_{ij}>0$.
\end{claim}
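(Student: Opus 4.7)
The plan is to argue by contradiction, assuming $F_{ij}=0$ for some pair $\{i,j\}\subseteq \{1,2,3\}$, and derive $\rho\geq 3a$, contradicting the standing assumption \eqref{eq:first_bound} that $a>\rho/3$. This is the same flavor of contradiction used to prove Claim \ref{clm:non_zero}.

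Concretely, I would first extract lower bounds on each remaining term in the decomposition
$$
\rho = e(w_i) = F_i+F_{ij}+F_{ik}+f_3.
$$
From $F_{ij}=0$ together with the inequality \eqref{eq:opposite} applied to the pair $(i,j)$, I immediately obtain $f_3\geq a$. Next, Claim \ref{clm:almost_last} gives $F_i+F_{ij}+F_{ik}\geq 2a$, which under the hypothesis $F_{ij}=0$ collapses to $F_i+F_{ik}\geq 2a$.

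Adding these two estimates yields $\rho = (F_i+F_{ik})+f_3 \geq 2a+a = 3a$, contradicting \eqref{eq:first_bound}. Since the argument does not single out any specific pair, it shows $F_{ij}>0$ for every permutation $\{i,j,k\}$ of $\{1,2,3\}$.

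I do not anticipate a substantial obstacle here: the step is a short bookkeeping exercise using only \eqref{eq:opposite} and the already established Claim \ref{clm:almost_last}. The genuine work has already happened in the proof of Claim \ref{clm:almost_last} via the existence of a $P_i$--$P_j$ edge from Claim \ref{clm:p4}; note in particular that we do not need to invoke the stronger consequence $F_i,F_j\geq a$ of \eqref{eq:ivsj}.
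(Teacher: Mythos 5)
Your proposal is correct and matches the paper's own proof essentially verbatim: both deduce $f_3\geq a$ from \eqref{eq:opposite} and $F_i+F_{ik}\geq 2a$ from Claim \ref{clm:almost_last} under the assumption $F_{ij}=0$, and both then contradict $e(w_i)=\rho<3a$ via \eqref{eq:first_bound}.
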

\begin{proof}
Assuming the contrary, we get $f_3\geq a$ from \eqref{eq:opposite} and
$F_i+F_{ik}\geq 2a$ from Claim \ref{clm:almost_last}. This (again)
contradicts $e(w_i)=\rho<3a$.
\end{proof}

Now we finally have
\begin{claim} \label{clm:Fi}
$F_i\geq a$.
\end{claim}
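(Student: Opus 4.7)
The plan is to observe that, after the sequence of claims culminating in Claim~\ref{clm:fij}, the desired bound $F_i\geq a$ for every $i\in\{1,2,3\}$ is essentially free. First, recall that at the start of the case $c=3$ analysis we already reduced to the situation $F_1,F_2\geq a$, so only the case $i=3$ needs genuine work. The idea is to combine the two claims right above: Claim~\ref{clm:fij} asserts that $F_{ij}>0$ for every pair, while Claim~\ref{clm:ijvsk} says that positivity of $F_{ij}$ forces $F_k\geq a$.

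Concretely, I would apply Claim~\ref{clm:fij} with $(i,j)=(1,2)$ to conclude $F_{12}>0$, and then instantiate Claim~\ref{clm:ijvsk} with $(i,j,k)=(1,2,3)$ to deduce $F_3\geq a$. Together with the already established bounds $F_1\geq a$ and $F_2\geq a$, this proves the claim for all three indices.

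The nominal ``hard part''—showing that the third index also gives $F_i\geq a$—has in fact been absorbed into the preceding chain of claims. The real obstruction was establishing that none of the $F_{ij}$ can vanish (Claims~\ref{clm:non_zero}--\ref{clm:fij}), because zero values of $F_{ij}$ are precisely what would have allowed an asymmetric configuration in which one of the three $F_i$'s could fail to reach $a$ (via the degree budget $F_i+F_{ij}+F_{ik}+f_3=\rho<3a$). Once that obstruction is removed, Claim~\ref{clm:ijvsk} closes the gap immediately, so the proof of Claim~\ref{clm:Fi} reduces to a one-line citation of Claims~\ref{clm:ijvsk} and~\ref{clm:fij}.
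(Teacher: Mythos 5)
Your proof is correct and matches the paper's verbatim: the paper also derives Claim~\ref{clm:Fi} as an immediate consequence of Claims~\ref{clm:ijvsk} and~\ref{clm:fij}. Your added commentary on why the preceding chain of claims was the real work is accurate but not part of the paper's one-line proof.
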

\begin{proof}
Immediate from Claims \ref{clm:ijvsk} and \ref{clm:fij}.
\end{proof}

\begin{claim} \label{clm:sophisticated}
$\mu(w_i)+F_{jk}\geq 4a-\rho$.
\end{claim}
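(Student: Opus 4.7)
The plan is to fix a single vertex $v$ contributing to $F_i$ and to analyse its relative degree $e(v)=\rho$ by breaking $N(v)$ into four pairwise disjoint pieces. Such a $v$ exists by Claim~\ref{clm:non_zero}; by construction $v\sim w_i$ and $v\not\sim w_j,w_k$, and triangle-freeness combined with $v\sim w_i\sim v_1,v_2$ forces $v\not\sim v_1,v_2$.

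Write $B_1\df N_G(v_1)\setminus N_G(v_2)$, $B_2\df N_G(v_2)\setminus N_G(v_1)$, and let $X$ denote the set of vertices adjacent to $w_j$ or $w_k$ but not to $w_i$ (the union of the vertex sets underlying $F_j$, $F_k$ and $F_{jk}$). My four pieces will be $\{w_i\}$, $B_1$, $B_2$ and $X$. Their pairwise disjointness follows from triangle-freeness: every vertex in $X$ is adjacent to some $w_\ell\sim v_1,v_2$ and therefore lies in $I$, disjoint from $B_1\cup B_2\cup\{w_i\}\subseteq P\cup N_G(v_1)\cup N_G(v_2)$. I would then lower-bound $\mu(N_G(v)\cap S)$ for each piece. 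First, $\mu(N_G(v)\cap\{w_i\})=\mu(w_i)$. Second, since $v\not\sim v_1$, $P_3^N(v,v_1)\geq a$; the common neighbours of $v$ and $v_1$ lie in $N_G(v_1)=P\cup B_1$, and the only contribution from $P$ is $w_i$ (measure $\mu(w_i)$), yielding $\mu(N_G(v)\cap B_1)\geq a-\mu(w_i)$, and symmetrically for $B_2$. Third, any common neighbour of $v$ and $w_j$ must be non-adjacent to $w_i$ (else it forms a triangle with $v$ and $w_i$), so $P_3^N(v,w_j)\geq a$ provides a subset of $X$ of measure $\geq a$; symmetrically for $w_k$. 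Inclusion-exclusion, together with the bound $F_{jk}$ on the overlap of these two subsets, yields $\mu(N_G(v)\cap X)\geq 2a-F_{jk}$.

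Summing the four contributions gives $\rho=e(v)\geq \mu(w_i)+2(a-\mu(w_i))+(2a-F_{jk})=4a-\mu(w_i)-F_{jk}$, which rearranges to $\mu(w_i)+F_{jk}\geq 4a-\rho$. The only mildly delicate point is the third bound: when $F_{jk}>2a$ the estimate $2a-F_{jk}$ becomes vacuous, but the inequality remains algebraically valid and the conclusion still goes through. I expect the main obstacle to be careful bookkeeping of the four sets' disjointness and the inclusion-exclusion step, rather than any genuinely new idea beyond the $P_3^N$-plus-triangle-free trick already used repeatedly in Section~\ref{sec:combinatorial}.
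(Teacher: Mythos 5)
Your proof is correct and is essentially the same argument as the paper's, just written in explicit set-theoretic language instead of the paper's flag-algebra notation. Your pieces $\{w_i\}\cup B_1\cup B_2$ and $X$ correspond exactly to the paper's $N_G(v)\cap(N_G(v_1)\cup N_G(v_2))$ and $N_G(v)\cap I = T_4^{\mathcal I}(v_1,v_2,v)$, and your two bounds $\geq 2a-\mu(w_i)$ and $\geq 2a-F_{jk}$ are precisely the paper's \eqref{eq:exact_formula} and \eqref{eq:local}.
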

\begin{proof}
Let (by Claim \ref{clm:Fi}) $v$ be any vertex contributing to $F_i$. Then we
have the computation (cf. \eqref{eq:S_bound}):
\begin{equation}\label{eq:exact_formula}
  \longeq{&&\rho= e(v) = T_4^{\mathcal I}(v_1,v_2,v) + P_3^N(v_1,v) + P_3^N(v_2,v) -
  S_4^{\mathcal I}(v_1,v_2,v)\\ && \longeqskip  \geq T_4^{\mathcal I}(v_1,v_2,v)+2a-\mu(w_i).}
\end{equation}
On the other hand,
\begin{equation}\label{eq:local}
  2a\leq P_3^N(v,w_j) +P_3^N(v,w_k) \leq T_4^{\mathcal I}(v_1,v_2,v) + F_{jk}
\end{equation}
(note that $v$ may not be connected to vertices in $F_{ij},F_{ik},f_3$ as it
would have created a triangle with $w_i$). The claim follows from comparing
these two inequalities.
\end{proof}

Let us now extend the notation $f_3=F_{\{1,2,3\}}$ to
$$
f_\nu \df \sum_{S \in {[3] \choose \nu}} F_S.
$$
Then Claim \ref{clm:w} implies $f_0=0$ and hence
\begin{equation} \label{eq:volume}
f_1+f_2+f_3=\mu(I) =1-2\rho+a
\end{equation}
and also
\begin{equation} \label{eq:density}
f_1+2f_2+3f_3 =\sum_i e(w_i) = 3\rho.
\end{equation}

Next, Claim \ref{clm:Fi} implies
\begin{equation} \label{eq:f1}
f_1\geq 3a
\end{equation}
and Claim \ref{clm:sophisticated}, after summing it over $i\in [3]$ gives us
\begin{equation} \label{eq:f2}
f_2\geq 11a-3\rho.
\end{equation}
Resolving \eqref{eq:volume} and \eqref{eq:density} in $f_3$, we get
\begin{equation} \label{eq:resolving}
2f_1+f_2 =3-9\rho+3a.
\end{equation}
Comparing this with \eqref{eq:f1} and \eqref{eq:f2} gives us the bound
\begin{equation} \label{eq:3_14}
a\leq \frac 3{14}(1-2\rho)
\end{equation}
which is $\leq a_0(\rho)$ as long as $\rho\in [9/32, 1/3]$.

To complete the analysis of case $c=3$ we still have to prove that
$a(\rho)\leq \mathsf{Improved}(\rho)$ for $\rho_1\leq\rho\leq \frac 9{32}$.
As it uses some material from the proof of the Krein bound, we defer this to
Section \ref{sec:improved}.

\subsubsection{$c\geq 4$}

Fix arbitrarily distinct $w_1,w_2,w_3,w_4\in P$ and let us employ the same
notation $F_i, F_{ij}, F_{ijk}$ as in the previous section;
$\{i,j,k,\ell\}=\{1,2,3,4\}$. As before, let
$$
f_\nu=\sum_{S\in {[4]\choose\nu}} F_{S}.
$$
Note that since we allow $c>4$, this time $f_0$ need not necessarily be zero. We further let
$$
\widehat F_S \df \sum_{T\subseteq [4] \atop T\cap S\neq\emptyset} F_T
$$
be the measure of $\bigcup_{i\in S} N_G(w_i)$, and we also use abbreviations
$\widehat F_i, \widehat F_{ij}, \widehat F_{ijk}, \widehat F_{1234}$ in this
case.

\smallskip
To start with, $\widehat F_i=\rho$ and Claim \ref{clm:upper_bound} implies $\widehat F_{ij}\geq \rho+a$.

\begin{claim} \label{clm:ijk}
$\widehat F_{ijk} \geq \rho +2a.$
\end{claim}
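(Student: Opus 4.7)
The plan is to bootstrap from the already-established inequality $\widehat F_{ij} \geq \rho + a$ via a dichotomy on the geometry of the three neighborhoods $N(w_i), N(w_j), N(w_k)$. First I would note that $w_i, w_j, w_k \in P$ are pairwise non-adjacent by triangle-freeness (they all lie in $N(v_1)$), so Claim~\ref{clm:upper_bound} applies to every pair. I would then split into two cases.

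\textbf{Case 1.} Some pairwise intersection ``sticks out'', i.e., there exist a permutation $\{\alpha, \beta, \gamma\} = \{i, j, k\}$ and a vertex $x \in (N(w_\alpha) \cap N(w_\gamma)) \setminus N(w_\beta)$. Since $(w_\beta, x) \notin E$ and $x \neq w_\beta$ (as $x \sim w_\alpha$ while $w_\beta \not\sim w_\alpha$), the definition of $a$ gives $P_3^N(w_\beta, x) \geq a$. The crucial observation: for any $y \in N(w_\beta) \cap N(x)$, triangle-freeness applied to the edges $x \sim w_\alpha$ and $x \sim w_\gamma$ forces $y \notin N(w_\alpha) \cup N(w_\gamma)$ (else $y, x, w_\alpha$ or $y, x, w_\gamma$ is a triangle). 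Hence $\mu(N(w_\beta) \setminus (N(w_\alpha) \cup N(w_\gamma))) \geq a$, and combining with $\widehat F_{\alpha\gamma} \geq \rho + a$ through the disjoint decomposition $\widehat F_{ijk} = \widehat F_{\alpha\gamma} + \mu(N(w_\beta) \setminus (N(w_\alpha) \cup N(w_\gamma)))$ yields $\widehat F_{ijk} \geq \rho + 2a$.

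\textbf{Case 2.} No such $x$ exists for any of the three pairings, so $N(w_\alpha) \cap N(w_\beta) \subseteq N(w_\gamma)$ for every permutation. This collapses every pairwise intersection onto the triple intersection $T \df N(w_i) \cap N(w_j) \cap N(w_k)$. Writing $p \df \mu(T)$, inclusion-exclusion reduces to $\widehat F_{ijk} = 3\rho - 3p + p = 3\rho - 2p$; since $p = P_3^N(w_i, w_j) \leq \rho - a$ by Claim~\ref{clm:upper_bound}, we obtain $\widehat F_{ijk} \geq 3\rho - 2(\rho - a) = \rho + 2a$.

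The main delicate point is the triangle-free argument in Case~1: the witness $x$ must lie in the \emph{full} intersection $N(w_\alpha) \cap N(w_\gamma)$, not merely in one of the sets, because it is precisely the combination $x \sim w_\alpha$ \emph{and} $x \sim w_\gamma$ that simultaneously excludes $y$ from both $N(w_\alpha)$ and $N(w_\gamma)$. In Case~2 the only new idea is that the simultaneous failure of Case~1 in all three orientations forces all pairwise intersections to coincide with $T$, after which the one-variable inclusion-exclusion and Claim~\ref{clm:upper_bound} do all the work.
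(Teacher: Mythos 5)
Your proposal is correct and is essentially the paper's argument in different clothing: your Case~1 (a vertex adjacent to exactly two of the three $w$'s exists) matches the paper's $f_2^\ast>0$ subcase, where one finds $a$ new measure outside $N(w_\alpha)\cup N(w_\gamma)$ via triangle-freeness, and your Case~2 (all pairwise intersections collapse to the triple one) is exactly the $f_2^\ast=0$ subcase with the same inclusion-exclusion computation $\widehat F_{ijk}=3\rho-2p\geq \rho+2a$. The only cosmetic difference is that you work directly with sets $N(w_s)$ rather than the paper's $F_S^\ast$ bookkeeping (which it introduces only to ignore the fourth index $\ell$).
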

\begin{proof}
For $S\subseteq \{i,j,k\}$, let $F_S^\ast\df F_S+F_{S\cup \{\ell\}}$ be the result of ignoring $w_\ell$ and we (naturally) let
$$
f_\nu^\ast\df \sum_{S\in  {\{i,j,k\} \choose \nu}} F_S^\ast.
$$
Then (cf. \eqref{eq:density})
$$
f_1^\ast+2f_2^\ast+3f_3^\ast =3\rho,
$$
and also
$$
f_2^\ast+3f_3^\ast = P_3^N(w_i,w_j) + P_3^N(w_i,w_k) +P_3^N(w_j,w_k) \leq 3(\rho-a)
$$
by Claim \ref{clm:upper_bound}. Besides, $\widehat F_{ijk} =f_1^\ast+f_2^\ast +f_3^\ast$.

If $f_2^\ast=0$, we are done: $\widehat F_{ijk}=3\rho-2f_3^\ast\geq
3\rho-2(\rho-a)=\rho+2a$. Hence we can assume that $f_2^\ast>0$, say,
$F_{ij}^\ast>0$. Pick an arbitrary vertex $v$ corresponding to $F_{ij}^\ast$
then, as before, $\widehat F_{ijk} = \widehat F_{ij} + F_k^\ast\geq
\rho+a+P_3^N(v,w_k)\geq \rho+2a$.
\end{proof}

\begin{lemma} \label{lem:1234}
$\widehat F_{1234} \geq \rho+3a.$
\end{lemma}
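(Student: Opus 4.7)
The plan is to extend the inductive pattern of Claim \ref{clm:ijk}. The identity $\widehat F_{1234}=\widehat F_{ijk}+F_{\{\ell\}}$, valid for every partition $\{i,j,k\}\sqcup\{\ell\}=[4]$, combined with the bound $\widehat F_{ijk}\geq\rho+2a$ from Claim \ref{clm:ijk}, reduces everything to producing some label $\ell$ with $F_{\{\ell\}}\geq a$, or to handling a small number of residual algebraic situations.

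I would begin with the case $f_3>0$. Pick any vertex $v$ contributing to some $F_{ijk}$ and let $\ell$ be the missing label. Since $v$ is adjacent to $w_i, w_j, w_k$, triangle-freeness forces every vertex of $N_G(v)\cap N_G(w_\ell)$ to avoid $N_G(w_i)\cup N_G(w_j)\cup N_G(w_k)$, whence $N_G(v)\cap N_G(w_\ell)\subseteq F_{\{\ell\}}$. Therefore $F_{\{\ell\}}\geq P_3^N(v,w_\ell)\geq a$, and the identity delivers the bound.

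In the residual case $f_3=0$, note that $v_1, v_2\in F_{1234}$ forces $f_4>0$, and under $f_3=0$ we have the degree identity $f_1+2f_2+4f_4=4\rho$ and the pair bound $f_2+6f_4\leq 6(\rho-a)$ (summing Claim \ref{clm:upper_bound} over the six pairs). If $f_2=0$, then $P_3^N(w_i,w_j)=f_4\leq\rho-a$ for each pair, giving $\widehat F_{1234}=4\rho-3f_4\geq\rho+3a$ at once. If $F_{ij}>0$ for every pair, fix a witness $v_{ij}\in F_{ij}$ for each; triangle-freeness yields $F_{\{k\}}+F_{\{k,\ell\}}\geq a$ and $F_{\{\ell\}}+F_{\{k,\ell\}}\geq a$ where $\{k,\ell\}=[4]\setminus\{i,j\}$. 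Summing the twelve resulting inequalities produces $3f_1+2f_2\geq 12a$; combined with the degree identity this yields $f_1\geq 6a-2\rho+2f_4$, and substituting into $\widehat F_{1234}=2\rho+f_1/2-f_4$ gives the claim.

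The remaining possibility is that some $F_{ij}$ vanish while others are positive. Whenever the vanishing pattern fails to be closed under complementation---say $F_{ij}=0$ while $F_{k\ell}>0$ for its complementary pair---the witness from $F_{k\ell}$ gives $F_{\{i\}}+F_{\{i,j\}}\geq a$, and since $F_{\{i,j\}}=F_{ij}=0$ we recover $F_{\{i\}}\geq a$ and finish via the identity applied with missing label $i$. The main obstacle I foresee is the sub-case where the vanishing pattern is closed under complementation (for instance $F_{12}=F_{34}=0$ with all other $F_{ij}>0$); here the single-witness argument no longer delivers any $F_{\{\ell\}}\geq a$ directly, and one must combine the structural identities $P_3^N(w_i,w_j)=f_4$ forced by each zero pair (together with $f_3=0$) with the witness inequalities harvested from the non-zero pairs more delicately to close the proof.
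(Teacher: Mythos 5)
Your proof tracks the paper's strategy quite closely: starting from $\widehat F_{1234}=\widehat F_{jk\ell}+F_i$ and Claim~\ref{clm:ijk}, you try to produce some $F_i\geq a$, reducing to $f_3=0$; you then analyse the pattern of vanishing among $\{F_{ij}\}$ and correctly close the cases where the pattern is empty, full, or \emph{not} closed under complementation, as well as the separate sub-case $f_2=0$, by the same arithmetic the paper uses for $\Gamma=K_4$. However, you explicitly leave open the sub-case where the vanishing pattern \emph{is} closed under complementation and is neither empty nor full; in the paper's language this is precisely $\Gamma=C_4$ (after noting the ``perfect matching'' pattern $\Gamma=\overline{C_4}$ is either ruled out by the minimum-degree-$\geq 2$ consequence of \eqref{eq:triple} once one assumes $F_i<a$, or alternatively dispatched quickly from $F_i\geq a$ for all $i$ together with $f_4\leq\rho-a$). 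The $C_4$ case is the genuine content of the lemma beyond Claim~\ref{clm:ijk}: it requires first deriving $F_i\geq 3a-\rho>0$ and $F_{i,i+1}<a$ from \eqref{eq:ipm1}, then proving the structural Claims~\ref{clm:C4} (there is an edge between $F_i$ and $F_{i+1}$, by a counting argument analogous to Claim~\ref{clm:p4}) and~\ref{clm:last} ($F_i+F_{i+1}+F_{i,i+1}\geq 2a$, via a disjointness argument using that edge), and only then combining these with $e(w_1)=\rho$ to obtain $\widehat F_{1234}\geq\rho+3a$. Since you acknowledge that you have not carried out this step, the argument as written has a real gap exactly where the paper does its nontrivial work.
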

\begin{proof}
First, $\widehat F_{1234}=\widehat F_{jk\ell}+F_i\geq \rho+2a+F_i$ by Claim \ref{clm:ijk}.
Hence we can assume that $F_i<a$ (for all $i\in [4]$, as usual). Also, we can assume that $f_3=0$ since otherwise we are done by the same reasoning as in the proof of Claim \ref{clm:ijk}.

Now, let $\Gamma$ be the graph on $[4]$ with the set of edges
$$
E(\Gamma) =\set{(i,j)}{F_{ij}>0}.
$$
Analogously to \eqref{eq:ivsij}, we have
\begin{equation} \label{eq:triple}
F_i+F_{ij}+F_{i\ell} \geq a
\end{equation}
(recall that $F_{ij\ell}=0$) and, analogously to Claim \ref{clm:ijvsk},
\begin{equation} \label{eq:ijvskl}
F_{ij}>0 \Longrightarrow F_k+F_{k\ell} \geq a.
\end{equation}

Next, \eqref{eq:triple}, along with $F_i<a$, implies that the minimum degree
of $\Gamma$ is $\geq 2$, that is $\Gamma$ is the complement of a matching.
Hence there are only three possibilities: $\Gamma=K_4$, $\Gamma=C_4$ or
$\Gamma= K_4-e$, and the last one is ruled out by \eqref{eq:ijvskl} along
with $F_k<a$.

If $\Gamma=K_4$ then summing up \eqref{eq:ijvskl} over all choices of
$k,\ell$, we get $3f_1+2f_2\geq 12a$. Adding this with $f_1+2f_2+4f_4=4\rho$,
we get $\widehat F_{1234} = f_1+f_2+f_4\geq \rho+3a$. Thus it remains to deal
with the case $\Gamma=C_4$, say $E(\Gamma)=\{(1,2), (2,3), (3,4), (4,1)\}$.

\smallskip
First we observe (recall that $f_3=0$) that
$$
f_4=P_3^N(w_1,w_3) (= P_3^N(w_2,w_4)) \geq a.
$$
Next, \eqref{eq:triple} amounts to
\begin{equation} \label{eq:ipm1}
F_i +F_{i,i+1} \geq a
\end{equation}
(all summations in indices are mod 4) and hence
$2F_i+F_{i,i+1}+F_{i,i-1}+f_4\geq 3a$. Comparing with
$$
F_i +F_{i,i+1} +F_{i,i-1} +f_4 = e(w_i) = \rho,
$$
we see that $F_i\geq 3a-\rho$ which is strictly positive by the assumption
\eqref{eq:first_bound}. Likewise, $F_{i,i+1}=\rho-f_4-(F_i+F_{i,i-1})\leq
\rho-2a<a$.

\begin{claim} \label{clm:C4}
There is an edge between $F_i$ and $F_{i+1}$.
\end{claim}
\begin{proofof}{Claim \ref{clm:C4}} This is similar to the proof of Claim
\ref{clm:p4}. Pick up a vertex $v$ contributing to $F_i$ ($F_i>0$ as we just
observed). Then $P_3^N(w_{i+1},v)\leq F_{i+1}+F_{i+1,i+2}$ and since we
already know that $F_{i+1,i+2}<a$, there exists a vertex corresponding to
$F_{i+1}$ and adjacent to $v$.
\end{proofof}

\begin{claim} \label{clm:last}
$F_i+F_{i+1}+F_{i,i+1}\geq 2a$.
\end{claim}
\begin{proofof}{Claim \ref{clm:last}} This is similar to the proof of Claim
\ref{clm:almost_last}. Pick vertices $v,v'$ witnessing Claim \ref{clm:C4}
with $i:=i+2$, so that in particular $(v,w_{i+2}), (v',w_{i-1}), (v,v')$ are
all in $E(G)$ while $(v,w_{i+1}), (v',w_i)$ are not. Then
$$
2a\leq P_3^N(v,w_{i+1}) + P_3^N(v',w_i) \leq F_i + F_{i+1} +F_{i,i+1}
$$
since $P_3^N(v,w_{i+1})\leq F_{i+1}+F_{i,i+1}$, $P_3^N(v',w_i)\leq F_i +
F_{i,i+1}$ and the corresponding sets are disjoint since $(v,v')$ is an edge.
\end{proofof}

Now we can complete the proof of Lemma \ref{lem:1234}:
$$
\widehat F_{1234} = (F_1 + F_{12} + F_{14} + F_{1234}) + (F_2+F_{23}) +
(F_3+F_4+F_{34}) \geq \rho+3a
$$
by \eqref{eq:ipm1} and Claim \ref{clm:last}.
\end{proof}

This also completes the proof of Theorem \ref{thm:main} for $\rho\geq \rho_1$
(that is, modulo the bound $\mathsf{Improved}(\rho)$ deferred to Section
\ref{sec:improved}). Indeed, since $\widehat F_{1234}\leq 1-2\rho+a$, Lemma
\ref{lem:1234} implies $a\leq \frac{1-3\rho}{2}$ which is $\leq a_0(\rho)$ as
long as $\rho\in [\rho_1, 1/3]$.

\subsection{Analytical lower bounds} \label{sec:analytical}

In this section we prove the bounds $a(\rho)\leq \mathsf{Krein}(\rho)\
(\rho\leq \rho_0)$, $a(\rho)\leq \widehat{\mathsf{Krein}}(\rho)\ (\rho\in
[\rho_0,\rho_1])$ and $a(\rho)\leq \mathsf{Improved}(\rho)\ (\rho\in [\rho_2,
9/32])$. We keep all the notation and conventions from the previous section.

\medskip
Let us continue a bit our crash course on flag algebras we began in Section
\ref{sec:prel}. The product $F_1(v_1,v_2,\ldots,v_k)F_2(v_1,v_2,\ldots,v_k)$,
where $F_1$ and $F_2$ are flags of the same type and $v_1,\ldots,v_k\in V(G)$
induce this type in $G$, can be always expressed as a {\em fixed} (that is,
not depending on $G,v_1,\ldots,v_k$) linear combination of expressions of the
form $F(v_1,\ldots,v_k)$. The general formula is simple (see \cite[eq.
(5)]{flag}) but it will be relatively clear how to do it in all concrete
cases we will be dealing with. We stress again that it is only possible
because we sample vertices with repetitions, otherwise the whole theory
completely breaks down. Also, things can be easily set up in such a way that,
after extending it by linearity to expressions $f(v_1,\ldots,v_k)$, where $f$
is a formal $\mathbb R$-linear combination of flags, this becomes the product
in a naturally defined commutative associative algebra.

We also need the {\em averaging} or {\em unlabelling} operator\footnote{For
the reader familiar with graph limits, let us remark that their operator is
different but connected to ours via a simple M\"obius transformation,
followed by summation over several types.} $f\mapsto \eval f{\sigma,\eta}$.
Let $\sigma$ be a type of size $k$, and $\eta\injection{[k']}{[k]}$ be an
injective mapping, usually written as $[\eta_1,\ldots,\eta_{k'}]$ or even
$\eta_1$ when $k=1$ (here $\eta_1,\ldots,\eta_{k'}$ are pairwise different
elements of $[k]$). Then we have the naturally defined type $\sigma|_\eta$ of
size $k'$ given by $(i,j)\in E\of{\sigma|_\eta}$ if and only if
$(\eta_i,\eta_j)\in E(\sigma)$. Now, given a linear combination $f$ of
$\sigma$-flags and $w_1,\ldots,w_{k'}\in V(G)$ spanning the type
$\sigma|_\eta$, we consider the expectation $\expect{f(\bar v_1,\ldots\bar
v_k)}$, where $\bar v_j$ is $w_i$ if $j=\eta_i$ and picked according to the
measure $\mu$, independently of each other, when $j\not\in\im(\eta)$.  Again,
there is a very simple general formula computing this expectation as a real
linear combination of $\sigma|_\eta$-flags, denoted by $\eval f{\sigma,
[\eta_1,\ldots,\eta_{k'}]}$ that, again, does not depend on
$G,w_1,\ldots,w_{k'}$ \cite[\S 2.2]{flag}.

\begin{remark} \label{rem:normalization}
It is important (and turns out very handy in concrete computations) to note
that we set $f(\bar v_1,\ldots, \bar v_k)\df 0$ if $\bar v_1,\ldots,\bar v_k$
do not induce $\sigma$. In particular, we let
\begin{equation}\label{eq:typeflag}
  \langle \sigma, \eta \rangle \df \eval{1}{\sigma,\eta};
\end{equation}
this is simply the pair $(\sigma,\eta)$ viewed as a $\sigma|_\eta$-flag with
an appropriate coefficient \cite[Theorem 2.5(b)]{flag}. In other words,
$\eval{f}{\sigma,\eta}$ is {\bf not} the conditional
expectation by the event ``$(\bar v_1,\ldots,\bar v_k)$ induce $\sigma$'' but
the expectation of $f$ multiplied by the characteristic function of this
event.
\end{remark}

Finally, we also need the lifting operator $\pi^{\sigma,\eta}$, where
$\sigma,\eta$ are as above. Namely, for a $\sigma|_\eta$-flag $F$, let
$$
\pi^{\sigma,\eta}(F)(v_1,\ldots,v_k) \df F(v_{\eta_1},\ldots,v_{\eta_{k'}})
$$
be the result of forgetting certain variables among $v_1,\ldots,v_k$ and
possibly re-enumerating the remaining ones according to $\eta$. It may look
trivial but we will see below that it turns out to be very handy in certain
calculations. Also note that, unlike $\eval{\cdot}{\sigma,\eta}$,
$\pi^{\sigma,\eta}$ does respect the multiplicative structure.

When $\eta$ is empty, $\eval f{\sigma,\eta}$ and $\pi^{\sigma, \eta}$ are abbreviated to $\eval f{\sigma}$ and $\pi^\sigma$, respectively.

The main tool in flag algebras is the light version of the Cauchy-Schwartz
inequality formalized as
\begin{equation} \label{eq:es}
\eval{f^2}{\sigma,\eta} \geq 0,
\end{equation}
and the power of the method relies on the fact that positive linear combinations of these inequalities can be arranged as a semi-definite programming problem. But the resulting proofs are often very non-instructive, so in this paper we have decided to use more human-oriented language of optimization. Let us stress that, if desired, the argument can be also re-cast as a purely symbolic sum-of-squares computation based on statements of the form \eqref{eq:es}.

\bigskip
After this preliminary work, let us return to the problem at hand. As in the
previous section, we fix arbitrarily two non-adjoint vertices $v_1,v_2$ with
$P_3^N(v_1,v_2)=a$ and let $P\df N_G(v_1)\cap N_G(v_2)$, $I\df V(G)\setminus
(N_G(v_1) \cup N_G(v_2))$. Recall that $\mu(P) =a$ and $\mu(I) = 1-2\rho+a$.

\subsubsection{Krein bounds} \label{sec:krein}

We are going to estimate the quantity $\eval{S_4^{\mathcal I}(T_4^{\mathcal
I} + S_4^{\mathcal I})}{\mathcal I, [1,2]}(v_1,v_2)$ from both sides and
compare results.

The upper bound does not depend on whether $\rho\leq \rho_0$ or not and it
consists of several typical flag-algebraic computations.

{\bf Convention.} When the parameters $(v_1,v_2,\ldots,v_k)$ in flags are
omitted, this means that the inequality in question holds for their arbitrary
choice. We specify them explicitly when the fact depends on the specific
property $P_3^N(v_1,v_2)=a$ of $v_1$ and $v_2$.

As we have already implicitly computed in the previous section,
$$
\eval{(S_4^{\mathcal I})^2}{\mathcal I, [1,2]} =\frac 13K_{32}^N =\frac 12\eval{K_{32}^{\mathcal P}}{\mathcal P,[1,2]}.
$$
Similarly,
$$
\eval{S_4^{\mathcal I}T_4^{\mathcal I}}{{\mathcal I,[1,2]}} = \frac 12\eval{U_5^{\mathcal P}}{\mathcal P,[1,2]}.
$$
Altogether we have
\begin{equation} \label{eq:sst}
\eval{S_4^{\mathcal I}(S_4^{\mathcal I}+T_4^{\mathcal I})}{\mathcal I,[1,2]} = \frac 12\eval{K_{32}^{\mathcal P}+U_5^{\mathcal P}}{\mathcal P, [1,2]}.
\end{equation}

On the other hand, we note that $P_3^{E,b} =\pi^{E,2}(e) =\rho$ and since $\frac 12P_3^{1,b}=\eval{P_3^{E,b}}{E,1}$, we also have $P_3^{1,b}=2\rho^2$. Hence
\begin{equation} \label{eq:kuvw}
2\rho^2 =\pi^{\mathcal P,3}(P_3^{1,b}) = K_{32}^{\mathcal P} + U_5^{\mathcal P} +V_5^{\mathcal P,1} +V_5^{\mathcal P,2}.
\end{equation}
Let us compute the right-hand side here. We have
$$
V_5^{\mathcal P,1}=2\eval{V_5^{\mathcal D,1}}{\mathcal D, [1,2,3]}
$$
\begin{equation} \label{eq:v1v2specific}
\langle \mathcal D, [1,2,3] \rangle(v_1,v_2) = \pi^{\mathcal P, [1,2]}(\bar P_3^{N,b})(v_1,v_2) =\rho-a
\end{equation}
(see the definition \eqref{eq:typeflag}) and
$$
V_5^{\mathcal D,1} = \pi^{\mathcal D,[3,4]}(P_3^N) \geq a.
$$
Putting these together,i
$$
V_5^{\mathcal P,1}(v_1,v_2,w) \geq 2a(\rho-a)\ (w\in P)
$$
and, by symmetry, the same holds for $V_5^{\mathcal P,2}$. Comparing with \eqref{eq:kuvw}, we find that
\begin{equation} \label{eq:kplusu}
(K_{32}^{\mathcal P} +U_5^\mathcal P)(v_1,v_2,w)\leq 2\rho^2-4a(\rho-a) =  2((\rho-a)^2+a^2).
\end{equation}
Averaging this over all $w\in P$ and taking into account \eqref{eq:sst}, we arrive at our first main estimate
\begin{equation} \label{eq:s4t4}
\eval{S_4^{\mathcal I}(S_4^{\mathcal I} + T_4^{\mathcal I})}{\mathcal I,[1,2]}(v_1,v_2) \leq a(\rho^2-2a(\rho-a)).
\end{equation}

\medskip
For the lower bound we first claim that
\begin{equation}\label{eq:t4I}
  T_4^{\mathcal I} \leq S_4^{\mathcal I} +\rho-2a.
\end{equation}
This was already established in \eqref{eq:exact_formula}, but let us re-cup the argument using the full notation:
$$
\rho=\pi^{\mathcal I,3}(e) = T_4^{\mathcal I} + \pi^{\mathcal I,[1,3]}(P_3^N) + \pi^{\mathcal I,[2,3]}(P_3^N)-S_4^{\mathcal I} \geq T_4^{\mathcal I}+2a-S_4^{\mathcal I}.
$$
Next, we need a lower bound on $T_4^N(v_1,v_2)=\eval{T_4^{\mathcal
I}}{\mathcal I,[1,2]}(v_1,v_2)$, that is on the density of those edges that
have both ends in $I$. For that we first classify all edges of $G$ according
to the number of vertices they have in $I$:
\begin{equation} \label{eq:rho_expansion}
\pi^N(\rho) = T_4^N + \of{S_4^N + \sum_{i=1}^2 V_4^{N,i}} +P_4^N.
\end{equation}
Now,
$$
S_4^N(v_1,v_2) = 2\eval{\pi^{\mathcal P,3}(e)}{\mathcal P,[1,2]}(v_1,v_2) = 2a\rho.
$$
Further we note that
\begin{equation}\label{eq:rhoqv}
  \rho(\rho-a) = \eval{\pi^{\mathcal Q_i,3}(e)}{\mathcal Q_i,[1,2]}(v_1,v_2)= \frac 12 \of{V_4^{N,i}+P_4^N}(v_1, v_2)\ (i=1,2).
\end{equation}
Summing this over $i=1,2$ and plugging our findings into \eqref{eq:rho_expansion}, we get
\begin{equation} \label{eq:t4n_prel}
\rho = T_4^N(v_1,v_2)+2a\rho +4\rho(\rho-a) -P_4^N(v_1,v_2).
\end{equation}
So, the only thing that still remains is to estimate $P_4^N(v_1,v_2)$ but
this time {\bf from below}. For that it is sufficient to compute its
contribution to the right-hand side of \eqref{eq:rhoqv} (letting, say,
$i:=1$):
$$
a(\rho-a) \leq \eval{\pi^{\mathcal Q_1,[2,3]}(P_3^N)}{\mathcal Q_1, [1,2]}=\frac 12 P_4^N(v_1,v_2).
$$
Substituting this into \eqref{eq:t4n_prel}, we arrive at our estimate on the number of edges entirely within $I$:
\begin{equation} \label{eq:t4n}
\longeq{&&\eval{T_4^{\mathcal I}}{\mathcal I}(v_1,v_2) =T_4^N(v_1,v_2)\\
&& \longeqskip \geq \rho-2a\rho-4\rho(\rho-a) +2a(\rho-a) = \rho - 2(\rho^2+(\rho-a)^2).}
\end{equation}

We are now prepared to bound $\eval{S_4^{\mathcal I}(S_4^{\mathcal I} +T_4^{\mathcal I})}{\mathcal I, [1,2]}(v_1,v_2)$ from below. As a piece of intuition, let us re-normalize $S_4^{\mathcal I}$ and $T_4^{\mathcal I}$ by the known values $\langle \mathcal I, [1,2]\rangle =1+a-2\rho$ (cf. Remark \ref{rem:normalization}) so that they become random variables in the triangle
$$
\mathbb T = \set{(S_4^{\mathcal I}, T_4^{\mathcal I})}{T_4^{\mathcal I}\geq 0,\ T_4^{\mathcal I}\leq S_4^{\mathcal I}+\rho-2a,\ S_4^{\mathcal I}\leq a}.
$$
Then we know the expectation of $S_4^{\mathcal I}$, have the lower bound
\eqref{eq:t4n} on the expectation of $T_4^{\mathcal I}$,  and we need to
bound the expectation of $S_4^{\mathcal I}(S_4^\mathcal I + T_4^{\mathcal
I})$, also from below. For that purpose we are going to employ duality, i.e.
we are looking for coefficients $\alpha,\beta,\gamma$ depending on $a,\rho$
only such that
$$
L(x,y) \df x(x+y)-(\alpha x+\beta y +\gamma)
$$
is non-negative on $\mathbb T$, and applying $\eval{\cdot}{\mathcal I,
[1,2]}$ to this relation produces ``the best possible result''. As we
mentioned above, an alternative would be to write down an explicit
``sum-of-squares'' expression: the resulting proof would be shorter but it
would be less intuitive.

Let us first observe the obvious upper bound
\begin{equation} \label{eq:trivial_bound}
a\leq \frac{\rho^2}{1-\rho},
\end{equation}
it follows from the computation $3\rho^2=3\eval{P_3^1}1 = P_3 =3
\eval{P_3^N}N\geq 3a(1-\rho)$. Next, the right-hand side of \eqref{eq:t4n} is
a concave quadratic function in $a$, with two roots $a_1(\rho)\df
\rho-\frac{\sqrt{2\rho-4\rho^2}}{2}$, $a_2(\rho)\df
\rho-\frac{\sqrt{2\rho+4\rho^2}}{2}$. Further, $a_1(\rho)\leq a_0(\rho)\leq
\frac{\rho^2}{1-\rho}\leq a_2(\rho)$. Hence we can assume w.l.o.g. that the
right-hand side in \eqref{eq:t4n} is non-negative. Therefore, by decreasing
$T_4^{\mathcal I}$ if necessary, we can assume that the bound \eqref{eq:t4n}
on its expectation is actually tight.

Next, we note that since the quadratic form $x(x+y)$ is indefinite, the
function $L(x,y)$ attains its minimum somewhere on the border of the compact
region $\mathbb T$. Since $L$ is linear on the line $x=a$ we can further
assume that the minimum is attained at one of the lines $y=0$ or
$y=x+\rho-2a$. Note further that along both these lines $L$ is convex.

\bigskip
We begin more specific calculations with the bound $g_K(\rho,a)\geq 0$ that
is less interesting but also less computationally heavy. As a motivation for
the forthcoming computations, we are looking for two points $(x_0,0)$,
$(x_1,x_1+\rho-2a)$ on the lines $T_4^{\mathcal I}=0$, $T_4^{\mathcal I} =
S_4^{\mathcal I}+\rho-2a$ that are collinear\footnote{cf. \eqref{eq:t4n}, the
normalizing factor $1-2\rho+a$ is suggested by Remark
\ref{rem:normalization}. The particular choice of $c_x,c_y$ is needed only
for the ``best possible result'' part.} with the point $(c_x,c_y)$, where
$$
c_x\df \frac{a\rho}{1-2\rho+a},\ \ \ c_y\df \frac{\rho-2(\rho^2+(\rho-a))^2}{1-2\rho+a}.
$$
and such that the function $L(x,0)$ has a double root at $x_0$ while $L(x,x+\rho-2a)$ has a double root at $x_1$. Solving all this in $\alpha,\beta,\gamma,x_0,x_1$ gives us (see the Maple worksheet)
\begin{eqnarray}
\label{eq:x0} x_0 &=& c_x+(\sqrt 2 -1)c_y\\
\nonumber x_1 &=& \of{1-\frac{\sqrt 2}2}((\sqrt 2+1)x_0-(\rho-2a))\\
\nonumber \alpha &=& 2x_0\\
\nonumber \beta &=& (3-2\sqrt 2)(2(\sqrt 2+1)x_0-(\rho-2a))\\
\nonumber \gamma &=& -x_0^2.
\end{eqnarray}
The remarks above imply that indeed $L(x,y)|_{\mathbb T}\geq 0$ hence we have
\begin{equation} \label{eq:st4_lower_2}
\eval{S_4^{\mathcal I}(S_4^{\mathcal I} + T_4^{\mathcal I})}{\mathcal I, [1,2]}
\geq \alpha a\rho +\beta (\rho-2(\rho^2+(\rho-a)^2) +\gamma(1-2\rho+a).
\end{equation}
Comparing this with \eqref{eq:s4t4}, we get (up to the positive
multiplicative factor $\frac{1-2\rho+a}{2}$) that $g_K(\rho,a)\geq 0$.
 Given the way the function $\widehat{\mathsf{Krein}}$ was
defined, $g_K(\rho,a)<0$ whenever $a\in\of{\widehat{\mathsf{Krein}}(\rho),
\frac{\rho^2}{1-\rho}}$. The required bound $a\leq
\widehat{\mathsf{Krein}}(\rho)$ now follows from \eqref{eq:trivial_bound}.

\medskip
The improvement $f_K(\rho,a)\geq 0$ takes place when the right-hand side in
\eqref{eq:x0} is $>a$ since then we can hope to utilize the condition
$S_4^\mathcal I\leq a$. As above, we first explicitly write down a solution
of the system obtained by replacing the equation $L'(x,0)|_{x=x_0}=0$ with
$x_0=a$ and only then justify the result.

Performing the first step in this program gives us somewhat cumbersome
rational functions that we attempt to simplify by introducing the
abbreviations
\begin{eqnarray*}
u_0(\rho,a) &\df& \frac 17(\rho+2a-2a\rho-4\rho^2)\\
u_1(\rho,a) &\df& \frac 17(3\rho-a-7a^2+15a\rho-12\rho^2)\\
u(\rho,a) &\df& 4u_0(\rho,a) + u_1(\rho,a).
\end{eqnarray*}
Then we get
\begin{eqnarray*}
x_0 &=& a\\
x_1 &=& \frac{a(2a-\rho^2-3\rho a)}{u(\rho,a)}\\
\alpha &=& 2a+ \frac{7(\rho-a)(u_1(\rho,a)^2-2u_0(\rho,a)^2)}{u(\rho,a)^2}\\
\beta &=& \frac{a(34u_0(\rho,a)^2+3u_1(\rho,a)^2-4u_0(\rho,a)u_1(\rho,a)-
2a\rho(1-3\rho+a)^2)}{u(\rho,a)^2}\\
\gamma &=& a^2-\alpha a.
\end{eqnarray*}

In order to analyze this solution, we first note that due to the bound just
established we can assume w.l.o.g. that
$$
a \in [\mathsf{Krein}(\rho), \widehat{\mathsf{Krein}}(\rho)].
$$

The function $u_0(\rho,a)$ is linear and increasing in $a$ and $u_0\of{\rho,
\mathsf{Krein}(\rho)}>0\ (\rho\neq 0)$ hence $u_0(\rho,a)\geq 0$. The
function $u_1(\rho,a)$ is quadratic concave in $a$ and $u_1\of{\rho,
\mathsf{Krein}(\rho)}, u_1\of{\rho, \widehat{\mathsf{Krein}}(\rho)}\geq 0$.
These two facts imply that $u(\rho,a)>0$ ($\rho>0$) hence our functions are
at least well-defined.

Next, $u_0,u_1\geq 0$ imply that $L'(x,0)|_{x=a} = 2a-\alpha$ has the sign
opposite to $u_1(\rho,a)-\sqrt 2u_0(\rho,a)$. This expression (that up to a
constant positive factor is equal to $c_x+(\sqrt 2-1)c_y-a$) is also concave
in $a$. Moreover, it is non-negative for $\rho \in [0,\rho_0],\ a \in
[\mathsf{Krein}(\rho), \widehat{\mathsf{Krein}}(\rho)]$ (at $\rho=\rho_0$ the
two bounds meet together: $\mathsf{Krein}(\rho_0) =
\widehat{\mathsf{Krein}}(\rho_0)=\rho_0/3$ and also $u_1(\rho,\rho/3)-\sqrt
2u_0(\rho,\rho/3)=0$). This completes the proof of $L'(x,0)|_{x=a}\leq 0$
hence (given that $L(a,0)=0$) we have $L(x,0)\geq 0$ for $x\leq a$. As we
argued above, this gives us $L|_{\mathbb T}\geq 0$ which implies
\eqref{eq:st4_lower_2}, with new values of $\alpha,\beta,\gamma$. Comparing
it with \eqref{eq:t4n}, we get $f_K(\rho,a)\geq 0$, up to the positive
multiplicative factor $\frac{2a(\rho-a)}{u(\rho,a)}$. This concludes the
proof of $f_K(\rho,a)\geq 0$ whenever $\rho\leq\rho_0$ and hence of the bound
$a\leq\mathsf{Krein}(\rho)$ in that interval.

\medskip
As a final remark, let us note that since the final bound $f_K(\rho,a)\geq 0$
has a very clear meaning in algebraic combinatorics, it looks likely that the
disappointingly complicated expressions we have encountered in proving it
might also have a meaningful interpretation. But we have not pursued this
systematically.

\subsubsection{The improved bound for $c=3$} \label{sec:improved}

Let us now finish the proof of the bound $a\leq \mathsf{Improved}(\rho),\
\rho \in [\rho_2,9/32]$ left over from Section \ref{sec:c3}. We utilize all
the notation introduced there, assume that $c=3$, and we need to prove that
$f_I(\rho,a)\geq 0$. We also introduce the additional notation
$$
a_i \df \mu(w_i)\ (i=1..3)
$$
for the weights of the vertices comprising the set $P$; thus, $\sum_{i=1}^3
a_i=a$.

We want to obtain an upper bound on $T_4^N(v_1,v_2)$ and then compare it with
\eqref{eq:t4n}. Let us split $I = J\stackrel .\cup K$, where $J$ corresponds
to $f_1$ and $K$ corresponds to $f_2+f_3$. Recalling that
$$
T_4^N = \eval{T_4^\mathcal I}{\mathcal I, [1,2]},
$$
let us split the right-hand side according to this partition as (with slight
abuse of notation)
$$
\eval{T_4^\mathcal I}{\mathcal I, [1,2]} = \eval{T_4^\mathcal I}{\mathcal J,
[1,2]} + \eval{T_4^\mathcal I}{\mathcal K,
[1,2]}.
$$

When $v\in J$ corresponds to $F_i$, we have $S_4^{\mathcal I}(v_1,v_2,v) =
a_i$ and hence, by \eqref{eq:t4I}, $T_4^{\mathcal I}(v_1,v_2,v)\leq
\rho-2a+a_i$. Thus
$$
\eval{T_4^\mathcal I}{\mathcal J, [1,2]} \leq \sum_i F_i(\rho-2a+a_i).
$$

In order to bound $\eval{T_4^\mathcal I}{\mathcal K, [1,2]}$, we first note
that $K$ is independent (every two vertices in $K$ have a common neighbor in
$P$). Furthermore, the only edges between $K$ and $J$ are between parts
corresponding to $F_i$ and $F_{jk}$. Hence $\eval{T_4^\mathcal I}{\mathcal K,
[1,2]}\leq \sum_i F_iF_{jk}$ and we arrive at the bound
\begin{equation} \label{eq:symmetric}
T_4^N(v_1,v_2) \leq \sum_i F_i(\rho-2a+F_{jk} +a_i).
\end{equation}

Next, let us denote by $\epsilon_i$ the (non-negative!) deficits in Claim
\ref{clm:sophisticated}:
$$
\epsilon_i \df a_i+F_{jk} - 4a+\rho;\ \epsilon_i\geq 0.
$$
Then  \eqref{eq:symmetric} re-writes as follows:
$$
  T_4^N(v_1,v_2) \leq 2af_1+\sum_{i=1}^3 F_i\epsilon_i.
$$

Let us now assume w.l.o.g. that $F_1\geq F_2\geq F_3$. Then, since all
$\epsilon_i$ are non-negative,
$$
\sum_{i=1}^3 F_i\epsilon_i \leq F_1\cdot \sum_{i=1}^3\epsilon_i = F_1
(f_2-11a+3\rho) = F_1(3-6\rho-8a-2f_1),
$$
where the last equality follows from \eqref{eq:resolving}. Summarizing,
\begin{eqnarray*}
T_4^N(v_1,v_2) &\leq& 2af_1+F_1(3-6\rho-8a-2f_1) = F_1(3-6\rho-8a) -
2f_1(F_1-a)\\ &\leq& F_1(3-6\rho-8a) - 2(F_1+2a)(F_1-a),
\end{eqnarray*}
where the last inequality holds since $F_1\geq a$ and $f_1=F_1+F_2+F_3\geq
F_1+2a$ by Claim \ref{clm:Fi}. The right-hand side here is a concave
quadratic function in $F_1$; maximizing, we find
$$
T_4^N(v_1,v_2)\leq \frac{33}2a^2+15a\rho-\frac{15}2a+\frac 92\rho^2 -\frac
92\rho +\frac 98.
$$
Comparing with \eqref{eq:t4n}, we get a constraint $Q(\rho,a)\geq 0$ that is
quadratic concave in $a$, and $\mathsf{Improved}(\rho)$ is its smallest root.
Moreover, $Q\of{\rho, \frac 3{14}(1-2\rho)} =
-\frac{(11\rho-2)(9-32\rho)}{49}\leq 0$ since $\rho_2>\frac 2{11}$. Hence the
preliminary bound \eqref{eq:3_14} can be improved to $a\leq
\mathsf{Improved}(\rho)$.

\section{Conclusion}
In this paper we have taken a prominent open problem in the algebraic graph
theory and considered its natural semi-algebraic relaxation in the vein of
extremal combinatorics. The resulting extremal problem displays a remarkably
rich structure, and we proved upper bounds for it employing methods greatly
varying depending on the range of edge density $\rho$. Many of these methods
are based on counting techniques typical for extremal combinatorics, and one
bound has a clean interpretation in terms of algebraic Krein bounds for the
triangle-free case.

\medskip
The main generic question left open by this work is perhaps how far can this
connection between the two areas go. Can algebraic combinatorics be a source
of other interesting extremal problems? In the other direction, perhaps flag
algebras and other advanced techniques from extremal combinatorics can turn
out to be useful for ruling out the existence of highly symmetric
combinatorial objects with given parameters? These questions are admittedly
open-ended so we would like to stop it here and conclude with several
concrete open problems regarding TFSR graphs and their relaxations introduced
in this paper.

Can the Krein bound $a(\rho)\leq \mathsf{Krein}(\rho)$ be improved for small
values of $\rho$? Of particular interest are the values $\rho=\frac{16}{77}$,
$\rho=\frac 5{28}$ or $\rho =\frac 7{50}$, ideally showing that
$a\of{\frac{16}{77}}= \frac 4{77}$, $a\of{\frac{5}{28}}= \frac 1{28}$ or
$a\of{\frac{7}{50}}= \frac 1{50}$. In other words, can we show that like the
four denser TFSR graphs, the M22 graph, the Gewirtz graph and the Hoffman
graph are also extremal configurations for their respective edge densities?

Another obvious case of interest is $\rho=\frac{57}{3250}$ corresponding to
the only hypothetical unknown Moore graph. More generally, can we rule out
the existence of a TSFR graph for at least one additional pair $(\rho,a)$ by
showing that actually $a(\rho)\leq a$?

For some ``non-critical'' (that is, not corresponding to TFSR graphs) $\rho$
it is sometimes also possible to come up with constructions providing
non-trivial {\bf lower} bounds on $a(\rho)$. A good example\footnote{Let us
remind that we confine ourselves to the region $\rho\leq 1/3$. A complete
description of all non-zero values $a(\rho)$ for $\rho>1/3$ follows from
\cite{BrT}.} is provided by the Kneser graphs $\text{KG}_{3k-1,k}$ having
$\rho=\frac{{2k-1 \choose k}}{{3k-1\choose k}}$ and $a=\frac{1}{{3k-1\choose
k}}$ but there does not seem to be any reasons to believe that they are
optimal. Are there any other values of $\rho$ for which we can compute
$a(\rho)$ exactly? Of particular interest here is the value $\rho=1/3$
critical for the Erd\"os-Simonovits problem (see again \cite[Problem 1]{BrT}
and the literature cited therein). Can we compute $a(1/3)$ or at least
determine whether $a(1/3)=0$ or not?

Speaking of which, is there {\bf any} rational $\rho\in (0,1/3]$ for which
$a(\rho)=0$? Equivalently, does there exist $\rho\in [0,1/3]$ for which there
are no triangle-free $\rho$-regular graphs (or, which is the same, weighted
twin-free graphs) of diameter 2? Note for comparison that there are many such
values for $\rho>1/3$; in fact, all examples leading to non-zero $a(\rho)$
fall into one of a few infinite series.

We conclude by remarking in connection with this question that regular
weighted triangle-free twin-free graphs of diameter 2 seem to be extremely
rare: a simple computer search has shown that Petersen is the {\bf only} such
graph on $\leq 11$ vertices with $\rho\leq 1/3$.

\bibliographystyle{alpha}
\bibliography{razb}
\end{document}